\numberwithin{equation}{section}
\theoremstyle{plain} % style de mise en pages plain: normal (
\newtheorem{proposition}{Proposition}[section]  % Proposition 
\newtheorem{lemma}[proposition]{Lemma}
\newtheorem{corollary}[proposition]{Corollary} % idem
\newtheorem{theorem}[proposition]{Theorem} % idem
\theoremstyle{definition} % style de mise en page d?É¬©finition
\newtheorem{definition}[proposition]{Definition}
\newtheorem{remark}[proposition]{Remark} % Remarque 
\newtheorem{example}[proposition]{Example} % Exemple 
\newtheorem{notation}[proposition]{Notation}
\newtheorem{notation and recalls}[proposition]{Notations and Recalls}
\newcommand\Hom{\operatorname{Hom}}
\newcommand\Ext{\operatorname{Ext}}
\newcommand\im{\operatorname{Im}}
\newcommand{\xx}{\underline x}
\newcommand{\ff}{\underline f}
\newcommand{\UU}{\underline U}
\newcommand\Spec{\operatorname{Spec}}
\newcommand{\qism}{\stackrel{\sim}{\longrightarrow}}
\author[P.~Schenzel]{Peter Schenzel}
\title[Pro-zero homomorphisms]
{On Pro-zero homomorphisms and sequences in local (co-)homology}
\address{Martin-Luther-Universit\"at Halle-Wittenberg,
	Institut f\"ur Informatik, D --- 06 099 Halle (Saale), Germany}
\email{schenzel@informatik.uni-halle.de}
\date{\today}
\begin{document}

	\begin{abstract} 
		Let $\xx$ denote a system of elements of a commutative ring $R$. For an $R$-module 
		$M$ we investigate when $\xx$ is $M$-pro-regular resp. $M$-weakly 
		pro-regular as  generalizations of $M$-regular sequences. This is done  in terms of 
		\v{C}ech co-homology resp. homology, defined by $H^i(\check{C}_{\xx} 
		\otimes_R \cdot)$ resp. by $H_i({\textrm{R}}  \Hom_R(\check{C}_{\xx},\cdot)) \cong H_i(\Hom_R(\mathcal{L}_{\xx},\cdot))$, where $\check{C}_{\xx}$ denotes the 
		\v{C}ech complex and $\mathcal{L}_{\xx}$ is a bounded free resolution of it as constructed in \cite{SS} 
		resp. \cite{Sp11}. The property of $\xx$ being $M$-pro-regular resp. $M$-weakly 
		pro-regular follows by the vanishing of certain  \v{C}ech co-homology resp. 
		homology modules, which is related to completions.  This extends previously work by 
		Greenlees and May (see \cite{GM}) and Lipman et al. (see \cite{ALL1}). This contributes to a further understanding of \v{C}ech (co-)homology in the non-Noetherian case.
		As a technical tool we use one of Emmanouil's 
		results (see \cite{Ei}) about the inverse limits and its derived functor. As an application we prove a global variant 
		of the results with an application to prisms in the sense of Bhatt and Scholze (see \cite{BhS}).
	\end{abstract}

	\subjclass[2020]
	{Primary: 13Dxx; Secondary: 13B35, 13C11 }
	\keywords{\v{C}ech homology and cohomology, pro-zero inverse systems, weakly pro-regular sequences,
		completion, prisms}

	\maketitle
	
	\section{Introduction}
	Let $R$ denote a commutative ring with $\xx = x_1,\ldots,x_r$ a system of elements. For an $R$-module $M$ 
	we study generalizations of a $M$-regular sequence called $M$-pro-regular sequence and 
	$M$-weakly pro-regular sequence. To this end we denote 
	by $\check{C}_{\xx}$ the \v{C}ech complex with respect to $\xx$ (see e.g. \cite[6.1]{SS}). It is 
	a bounded complex of flat $R$-modules. For an $R$-module $M$ we write $\check{C}_{\xx}(M) 
	= \check{C}_{\xx} \otimes_RM$. We call $\check{H}^i_{\xx}(M) = H^i(\check{C}_{\xx}(M)), i \in 
	\mathbb{Z}$, the \v{C}ech cohomology of $M$. Dually we look at the complex ${\rm{R}} \Hom_R(\check{C}_{\xx},M)$  
	in the derived category. There is a  free resolution of $\check{C}_{\xx}$ by 
	a bounded complex $\mathcal{L}_{\xx}$ and $\Hom_R(\mathcal{L}_{\xx},M)$ is a representative 
	of ${\rm{R}} \Hom_R(\check{C}_{\xx},M)$ (see \cite{SS} and \cite{Sp11}). We define $\check{H}_i^{\xx}(M) = H_i(\Hom_R(\mathcal{L}_{\xx},M)) 
	 \cong H_i( {\rm{R}} \Hom_R(\check{C}_{\xx},M)), i \in \mathbb{Z},$ as the \v{C}ech homology of $M$. 
	For the case of $R$ a Noetherian ring let $\mathfrak{a} = \xx R$ then it follows that 
	$\check{H}^i_{\xx}(M) \cong 
	H^i_{\mathfrak{a}}(M)$, the $i$-th local cohomology of $M$ with support in $\mathfrak{a}$. 
	At first this was established by Grothendieck (see \cite{Ga3} and \cite{Ga2}). 
	Dually, for Noetherian rings $R$ we have $\check{H}^{\xx}_i(M) \cong \Lambda_i^{\mathfrak{a}}(M)$, where $\Lambda_i^{\mathfrak{a}}(\cdot)$ 
	denotes the left derived functors of the completion $\Lambda^{\mathfrak{a}}(\cdot)$. Contributions 
	were done by Matlis (see \cite{Me1}), Simon (see \cite{Sam1}), Greenlees and May (see \cite{GM}) 
	and others. 
	
	Starting with Greenlees and May (see \cite{GM})  and Lipman et al. (see \cite{ALL1}) there were extensions to non-Noetherian rings with  sequences $\xx$ that are called pro-regular resp. weakly pro-regular 
	(see below for the definitions). 
	In particular, when $\xx$ is weakly pro-regular the isomorphisms $\check{H}^i_{\xx}(M) \cong 
	H^i_{\mathfrak{a}}(M)$ and $\check{H}^{\xx}_i(M) \cong \Lambda_i^{\mathfrak{a}}(M)$ hold for any 
	$i \in \mathbb{Z}$ and any $R$-module $M$ and more generally for any complex $X \in D(R)$ 
	(see \cite{PSY}, \cite{Pl} ,   \cite{Sp2} and  \cite{SS} for more details). 
	
	In the situation of $\xx$ an $R$-regular sequence there is a corresponding property of $\xx$ being an $M$-regular sequence (see e.g. \cite{Mh}). This is a challenge for the study of the relative 
	version that $\xx$ is weakly $M$-regular for modules instead of $M=R$. 
	Namely, $\xx$ is called  an $M$-weakly pro-regular sequence 
    (see also \cite[7.3.1]{SS}) provided the inverse system $\{H_i(\xx^{(n)};M)\}_{n \geq 1}$ is
	pro-zero for $i = 1, \ldots,r$, i.e. for each $n$ there is an integer $m \geq n$ such that the natural 
	map $H_i(\xx^{(m)};M) \to H_i(\xx^{(n)};M)$ is zero. 
	Here $\xx^{(n)} = x_1^n,\ldots,x_r^n$ and 
	$H_i(\xx^{(n)};M)$ denotes the Koszul homology.  An $R$-weakly pro-regular sequence is called weakly pro-regular. For a  first description of $M$-weakly 
	pro-regular sequences see \cite[Theorem 4.2]{Sp12}. Let $\widehat{M}^{\xx} = \Lambda^{\xx}(M)$ denote 
	the $\xx$-adic completion of $M$. 
	
	\begin{theorem} \label{in-1}
		For an $R$-module $M$ and a sequence $\xx = x_1,\ldots,x_r$ the following 
		is equivalent:
		\begin{itemize}
			\item[(i)] $\xx$ is $M$-weakly pro-regular.
			\item[(ii)] $\check{C}_{\xx}(\Hom_R(M,I))$ is a right resolution of $\Hom_R(\Lambda^{\xx}(M),I)$ 
			for any injective $R$-module $I$. 
			\item[(iii)] $\Hom_R(\mathcal{L}_{\xx}, M\otimes_R F)$ is a left resolution of $\Lambda^{\xx}(M \otimes_R F)$ for any free $R$-module $F$.
			\item[(iv)] $\Hom_R(\mathcal{L}_{\xx}, X)$ is a left resolution of $\Lambda^{\xx}(X)$ for  $X = M, M[T]$.
			\item[(v)] $\Hom_R(\mathcal{L}_{\xx}, M[T])$ is a left resolution of $\Lambda^{\xx}(M[T])$. 
		\end{itemize}
	\end{theorem}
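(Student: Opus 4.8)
The backbone of the argument is the realization of $\check{C}_{\xx}$ as the filtered colimit $\varinjlim_n K^{\bullet}(\xx^{(n)})$ of the finite free Koszul cochain complexes on the powers $\xx^{(n)}$. First I would record two computational inputs. Applying $\mathrm{R}\Hom_R(-,M)$ and using Koszul self-duality turns this colimit into a homotopy limit of the Koszul chain complexes $K_{\bullet}(\xx^{(n)};M)$, whence for every $i$ a short exact ($\varprojlim$--$\varprojlim^1$) sequence
\begin{equation*}
0 \to {\varprojlim_n}^1 H_{i+1}(\xx^{(n)};M) \to \check{H}_i^{\xx}(M) \to \varprojlim_n H_i(\xx^{(n)};M) \to 0 .
\end{equation*}
Dually, since $\Hom_R(-,I)$ is exact for injective $I$ and cohomology commutes with filtered colimits, one gets the clean identification
\begin{equation*}
\check{H}^i_{\xx}(\Hom_R(M,I)) \cong \varinjlim_n \Hom_R\bigl(H_i(\xx^{(n)};M),I\bigr).
\end{equation*}
The elementary facts I would isolate next are: a pro-zero tower has vanishing $\varprojlim$ and $\varprojlim^1$; pro-zeroness is inherited by $\{H_i(\xx^{(n)};M)\otimes_R F\}=\{H_i(\xx^{(n)};M\otimes_R F)\}$ for flat $F$, since Koszul homology commutes with flat base change; and $\Hom_R(-,I)$ carries a pro-zero tower to an ind-zero direct system, so that its colimit vanishes.

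With these in hand the ``easy'' implications are immediate. If $\xx$ is $M$-weakly pro-regular then $\{H_i(\xx^{(n)};M)\}$ is pro-zero for $i\ge 1$, the same holds after $\otimes_R F$, and feeding this into the first exact sequence gives $\check{H}_i^{\xx}(M\otimes_R F)=0$ for $i\ne 0$ and $\check{H}_0^{\xx}(M\otimes_R F)\cong\varprojlim_n (M\otimes_R F)/\xx^{(n)}(M\otimes_R F)=\Lambda^{\xx}(M\otimes_R F)$; this is exactly (iii), and (iii)$\Rightarrow$(iv)$\Rightarrow$(v) by specializing $F$ to $R$ and to $R[T]$. Likewise, feeding pro-zeroness into the colimit identification kills $\check{H}^i_{\xx}(\Hom_R(M,I))$ for $i\ge 1$, while in degree $0$ the natural map $\Hom_R(\Lambda^{\xx}(M),I)\to\check{H}^0_{\xx}(\Hom_R(M,I))$ is an isomorphism by the completion--torsion (Matlis) duality; this yields (ii).

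The substance lies in the converses, which I would prove as (v)$\Rightarrow$(i) and (ii)$\Rightarrow$(i), both ending at the pro-zero definition. For (ii)$\Rightarrow$(i), fix $i\ge 1$ and $n$, put $A_n=H_i(\xx^{(n)};M)$, and test the hypothesis against the \emph{injective hull} $I=E(A_n)$ together with the inclusion $\iota\colon A_n\hookrightarrow I$: since $\check{H}^i_{\xx}(\Hom_R(M,I))=\varinjlim_m\Hom_R(A_m,I)=0$, the class of $\iota$ dies at some finite stage, i.e. $\iota\circ f_{m,n}=0$ for some $m\ge n$, where $f_{m,n}\colon A_m\to A_n$ is the transition map; injectivity of $\iota$ then forces $f_{m,n}=0$, so $\{A_n\}$ is pro-zero. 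For (v)$\Rightarrow$(i), flatness of $R[T]$ gives $H_i(\xx^{(n)};M[T])=H_i(\xx^{(n)};M)\otimes_R R[T]$, so the hypothesis $\check{H}_i^{\xx}(M[T])=0$ for $i\ne 0$ together with the degree-$0$ isomorphism onto $\Lambda^{\xx}(M[T])$ feeds into the first exact sequence to yield $\varprojlim_n\bigl(H_i(\xx^{(n)};M)\otimes_R R[T]\bigr)=0$ and ${\varprojlim_n}^1\bigl(H_i(\xx^{(n)};M)\otimes_R R[T]\bigr)=0$ for every $i\ge 1$. The point is then to convert the vanishing of both derived limits of the \emph{polynomial} tower into pro-zeroness of the original tower: this is precisely Emmanouil's criterion (see \cite{Ei}), which I would invoke in the form that a tower $\{A_n\}$ is pro-zero if and only if $\varprojlim_n(A_n\otimes_R R[T])={\varprojlim_n}^1(A_n\otimes_R R[T])=0$. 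Applying it to $A_n=H_i(\xx^{(n)};M)$ finishes (v)$\Rightarrow$(i), and with it the whole cycle.

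The hard part is exactly this last conversion. Vanishing of $\varprojlim$ and $\varprojlim^1$ of the tower $\{H_i(\xx^{(n)};M)\}$ itself is strictly weaker than pro-zeroness, so the naked module $M$ cannot detect weak pro-regularity; the extra variable $T$ -- equivalently, testing against the free module $R[T]$ of countable rank, or against the injective hull in (ii) -- is what supplies the uniformity needed to upgrade ``each derived limit vanishes'' to ``the transition maps are eventually zero''. The steps I would treat most carefully are getting the indexing and the degree-$0$ terms in the two exact/colimit sequences to line up with $\Lambda^{\xx}$ on the nose, and pinning down Emmanouil's theorem in precisely the form used above.
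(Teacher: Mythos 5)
Your treatment of (i), (iii), (iv), (v) is essentially the paper's own route. The paper funnels these items through its Theorem \ref{apa-4}: the $\varprojlim$--$\varprojlim^1$ exact sequences for $\check{H}^{\xx}_i$ (Lemma \ref{apa-2}, which is your first displayed sequence), the identification $H_i(\xx^{(n)};M\otimes_RF)\cong H_i(\xx^{(n)};M)\otimes_RF$ for flat $F$, and, for the crucial converse (v)$\Rightarrow$(i), the passage from $\varprojlim H_i(\xx^{(n)};M)[T]=\varprojlim^1 H_i(\xx^{(n)};M)[T]=0$ to pro-zeroness of $\{H_i(\xx^{(n)};M)\}$. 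The criterion you attribute to Emmanouil in exactly this form is the paper's Corollary \ref{apn-3}; strictly speaking Emmanouil's published result is the Mittag-Leffler criterion ($\varprojlim^1 M_n[T]=0$ iff ML, Lemma \ref{apn-5}), and the pro-zero upgrade using the additional vanishing of $\varprojlim M_n[T]$ is the short extra argument in \ref{apn-3}; you would need to supply that step, but it is routine. Your (ii)$\Rightarrow$(i) via the injective hull $E(H_i(\xx^{(n)};M))$ and the colimit formula $\check{H}^i_{\xx}(\Hom_R(M,I))\cong\varinjlim_n\Hom_R(H_i(\xx^{(n)};M),I)$ is correct and is the standard mechanism (the paper does not reprove (ii) at all: it cites \cite[Proposition 5.3]{Sp11} for that item).

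The genuine gap is your degree-zero step in (i)$\Rightarrow$(ii). You assert that the natural map between $\check{H}^0_{\xx}(\Hom_R(M,I))$ and $\Hom_R(\Lambda^{\xx}(M),I)$ is an isomorphism ``by completion--torsion (Matlis) duality''. But over an arbitrary commutative ring with an arbitrary injective $I$ one only has
\[
\check{H}^0_{\xx}(\Hom_R(M,I))\cong\Gamma_{\xx R}(\Hom_R(M,I))\cong\varinjlim_n\Hom_R(M/\xx^{(n)}M,I),
\]
and the natural comparison map from this colimit to $\Hom_R(\varprojlim_n M/\xx^{(n)}M,I)=\Hom_R(\Lambda^{\xx}(M),I)$ is injective but in general far from surjective: its image consists exactly of the homomorphisms factoring through some $M/\xx^{(n)}M$, i.e.\ the ``continuous'' functionals. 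Already for $R=M=\mathbb{Z}$, $\xx=p$ and $I=\mathbb{Q}/\mathbb{Z}$ the colimit is $\mathbb{Z}_{p^{\infty}}$, whereas $\Hom_{\mathbb{Z}}(\mathbb{Z}_p,\mathbb{Q}/\mathbb{Z})$ is enormously larger (it contains the restriction to $\mathbb{Z}_p$ of arbitrary functionals on the $\mathbb{Q}$-vector space $\mathbb{Q}_p$), so Matlis duality in the form you invoke is simply unavailable outside the classical setting of a Noetherian (complete) local ring with $I=E(R/\mathfrak{m})$. This is precisely the delicate point that the paper avoids handling by delegating condition (ii) wholesale to \cite[Proposition 5.3]{Sp11}; to make your proposal complete you must either reproduce the argument given there, with its precise formulation of the degree-zero identification, or restrict the class of injectives/add hypotheses under which your duality claim is valid. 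As written, this direction of your cycle does not close.
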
 
	
	Note that the equivalence of (i), (iii) and (iv) in the particular case of $M = R$ was shown by 
	Positselski (see \cite[Theorem 3.6]{Pl}), that is in the case when $\xx$ is $R$-weakly pro-regular (or weakly pro-regular for short).  
	Then the complexes $\Hom_R(\mathcal{L}_{\xx},X)$ and $\textrm{L} \Lambda^{\xx}(X)$ are isomorphic 
	in the derived category for all $X \in D(R)$ (see \cite{PSY} generalizing the case of bounded complexes 
	shown in  \cite{Sp11}).  For the proof of \ref{in-1} and the notion of left/right resolution see the comments after \ref{apa-11}.
	
		The notion of a weakly pro-regular sequence $\xx = x_1,\ldots,x_r$  is defined in terms of the 
		Koszul homology of the whole sequence $\xx$. An $M$-regular sequence is defined by the vanishing of 
		$\xx_{i-1}M:_M x_i/\xx_{i-1}M$ for $i = 1,\ldots,r$, 	where $\xx_{i-1} = x_1,\ldots,x_{i-1}$. 
		As a generalization of that Greenlees and May (see \cite{GM}) 
		resp. Lipman et al. (see \cite{ALL1}) invented the notion of an $M$-pro-regular sequence. 
		Note that  both of the definitions  are equivalent (see \cite[Proposition 2.2]{Sp12}).
	A sequence $\xx$ is called $M$-pro-regular 
	if the inverse system $\{\xx_{i-1}^{(n)}M :_M  x_i^n/\xx_{i-1}^{(n)}M )\}_{n \geq 1}$ with multiplication by 
	$x_i^n$ is pro-zero for $i = 1,\ldots, r$. Note that if $\xx$ is $M$-regular it is also $M$-weakly pro-regular since $\xx_{i-1}^{(n)}M :_M  x_i^n/\xx_{i-1}^{(n)}M  =0$  (see \cite[16.1]{Mh}). 
     A characterization of pro-regular 
	sequences in terms of \v{C}ech cohomology is known (see \cite[Theorem 3.2]{Sp12} and \ref{apa-7}). 
	Here there is a description in the terms of \v{C}ech homology. See  \ref{apa-8} for the following:
	
	\begin{theorem} \label{in-2}
		Let $\xx = x_1,\ldots,x_r$ denote a sequence of elements of $R$. For an $R$-module $M$ the following 
		conditions are equivalent: 
		\begin{itemize}
			\item[(i)] The sequence $\xx$ is $M$-pro-regular. 
			\item[(ii)] $\check{H}^{x_i}_0(\Lambda^{\xx_{i-1}}(M \otimes_RF) )\cong 
			\Lambda^{\xx_i}(M \otimes_RF) $ and $\check{H}^{x_i}_1(\Lambda^{\xx_{i-1}}(M\otimes_RF)) = 0$ 
			for $i = 1,\ldots,r$ and any free $R$-module  $F$.
			\item[(iii)] $\check{H}_0^{\xx_i}(X) \cong \Lambda^{\xx_i}(X)$ and 
			$\check{H}_1^{\xx_i}(X) = 0$ for $i = 1,\ldots,r$ and $X = M,M[T]$.
			\item[(iv)] $\Lambda^{\xx_{i-1}}(M[T])$ is of bounded $x_i$-torsion for $i = 1,\ldots,r$.
		\end{itemize}
	\end{theorem}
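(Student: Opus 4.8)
The plan is to reduce every condition to a statement about the single element $x_i$ acting on the partially completed module $\Lambda^{\xx_{i-1}}(X)$, moving between the sequence $\xx_i$ and the single element $x_i$ by the transitivity of derived completion $\mathrm{L}\Lambda^{\xx_i}\simeq\mathrm{L}\Lambda^{x_i}\circ\mathrm{L}\Lambda^{\xx_{i-1}}$ (see \cite{PSY}), which under the running hypotheses collapses to an identification $\check{H}^{\xx_i}_\bullet(X)\cong\check{H}^{x_i}_\bullet(\Lambda^{\xx_{i-1}}(X))$. The computational backbone is the single-element case. For one element $x$ and a module $N$ I would use the telescope complex $\Tel(x)$ as a bounded free resolution of $\check{C}_x$ and apply the Milnor $\varprojlim$-sequence to the tower $\{(0:_N x^k)\}_k$ whose transition maps are multiplication by $x$, obtaining
\[ \check{H}^x_1(N)\cong\varprojlim_k(0:_N x^k),\qquad 0\to\varprojlim^1_k(0:_N x^k)\to\check{H}^x_0(N)\to\Lambda^x(N)\to 0. \]
Hence the two conditions ``$\check{H}^{x_i}_1(N)=0$ and $\check{H}^{x_i}_0(N)\cong\Lambda^{x_i}(N)$'' appearing throughout (ii) and (iii) are, for $N=\Lambda^{\xx_{i-1}}(X)$, exactly the simultaneous vanishing $\varprojlim_k(0:_N x_i^k)=\varprojlim^1_k(0:_N x_i^k)=0$.

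The first key observation is purely combinatorial: because the submodules $(0:_N x_i^k)$ form an increasing chain and the set of orders occurring in an $x_i$-power torsion module is downward closed (if $x_i^k u=0$ and $x_i^{k-1}u\neq0$ then $x_i u$ has order $x_i^{k-1}$), the tower $\{(0:_N x_i^k)\}_k$ is pro-zero if and only if the chain is eventually stationary, i.e.\ if and only if $N$ has bounded $x_i$-torsion. This is exactly what turns the pro-zero condition into condition (iv) once $N=\Lambda^{\xx_{i-1}}(M[T])$. For the bridge to the homological conditions I would invoke Emmanouil's theorem \cite{Ei}: a tower $\{A_k\}$ is pro-zero if and only if the system $\{A_k\otimes_R R[T]\}$ has vanishing $\varprojlim$ and $\varprojlim^1$. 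Thus adjoining the single polynomial variable $T$ upgrades the weak vanishing ``$\varprojlim=\varprojlim^1=0$'' (which, by the Milnor sequence, is all the \v{C}ech conditions see) to genuine pro-zeroness, and this is precisely why the module $M[T]$ must be carried alongside $M$ in (iii) and is the sole test module in (iv).

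With these two inputs the equivalences fall out. For (i)~$\Leftrightarrow$~(ii) I would use the characterization of pro-zeroness by flat base change (again \cite{Ei}): the tower $\{Q_i^n\}_n$, where $Q_i^n=(\xx_{i-1}^{(n)}M:_M x_i^n)/\xx_{i-1}^{(n)}M$, is pro-zero (which is condition (i), by \cite[Proposition 2.2]{Sp12}) if and only if $\{Q_i^n\otimes_R F\}_n$ has vanishing $\varprojlim$ and $\varprojlim^1$ for every free $F$; since $F$ is flat one has $Q_i^n\otimes_R F=(0:_{(M/\xx_{i-1}^{(n)}M)\otimes_R F}x_i^n)$, and completing in $\xx_{i-1}$ identifies this tower, up to pro-isomorphism, with $\{(0:_{\Lambda^{\xx_{i-1}}(M\otimes_R F)}x_i^k)\}_k$, whence the single-element formulas yield (ii). The implication (ii)~$\Rightarrow$~(iii) is specialization to $F=R$ and $F=R[T]$ together with the transitivity identification $\check{H}^{\xx_i}_\bullet\cong\check{H}^{x_i}_\bullet\circ\Lambda^{\xx_{i-1}}$; the converse (iii)~$\Rightarrow$~(ii) and the equivalence (iii)~$\Leftrightarrow$~(iv) use the $T$-input of the previous paragraph, so that the case $X=M[T]$ alone already recovers pro-zeroness, and hence the statement for all free $F$.

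The main obstacle is precisely this passage between the homological vanishing detected by \v{C}ech homology and the combinatorial pro-zero/bounded-torsion condition. The Milnor sequence only exposes $\varprojlim$ and $\varprojlim^1$ of the torsion tower, which is strictly weaker than pro-zeroness for a bare module; it is the combination of Emmanouil's theorem with the downward-closure of torsion orders that closes this gap, and making this interaction precise --- in tandem with the compatibility of $\Lambda^{\xx_{i-1}}$ with the relevant inverse systems needed for the pro-isomorphism in (i)~$\Leftrightarrow$~(ii) --- is where the real work lies. The remaining reductions, notably the transitivity $\check{H}^{\xx_i}_\bullet\cong\check{H}^{x_i}_\bullet\circ\Lambda^{\xx_{i-1}}$, are routine once the single-element theory and Theorem~\ref{in-1} are in hand.
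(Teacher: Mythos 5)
Your overall skeleton does match the paper's own route: the single-element dictionary via the Milnor sequences (the paper's \ref{apa-2}, which indeed gives $\check{H}^{x}_1(N)\cong\varprojlim_k(0:_N x^k)$ and presents $\check{H}^x_0(N)$ as a $\varprojlim^1$-extension of $\Lambda^x(N)$), Emmanouil's criterion in the $[T]$-form ``pro-zero $\Longleftrightarrow$ $\varprojlim A_k[T]=\varprojlim{}^1A_k[T]=0$'' (the paper's \ref{apn-3}), the equivalence of bounded $x$-torsion with stabilization of the colon chain (noted already in \ref{apa-3}), and the induction along $i$; for the latter the paper uses the unconditional short exact sequences $0\to\check{H}^{x_i}_0(\check{H}^{\xx_{i-1}}_j(X))\to\check{H}^{\xx_i}_j(X)\to\check{H}^{x_i}_1(\check{H}^{\xx_{i-1}}_{j-1}(X))\to 0$ from \cite[8.1]{Sp11}, which is the rigorous version of your ``collapse of transitivity'' and avoids assuming inductively that $\Hom_R(\mathcal{L}_{\xx_{i-1}},X)$ is already a resolution. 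All of that is sound.

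The genuine gap is exactly the step you flag as ``where the real work lies'' and then do not carry out: the passage between the diagonal tower $\{H_1(x_i^n;(M/\xx_{i-1}^{(n)}M)\otimes_RF)\}_{n}$, which defines pro-regularity, and the torsion tower $\{0:_{\Lambda^{\xx_{i-1}}(M\otimes_RF)}x_i^k\}_k$, which is what the \v{C}ech conditions see. Your claim that completion identifies these ``up to pro-isomorphism'' is false as stated: one has $0:_{\Lambda^{\xx_{i-1}}(Y)}x_i^n\cong\varprojlim_m H_1(x_i^n;Y/\xx_{i-1}^{(m)}Y)$, an inverse limit over $m$ of a bi-indexed system, and pro-isomorphism type is not preserved under $\varprojlim_m$; the discrepancy is measured precisely by the terms $\varprojlim_n\varprojlim{}^1_m H_1(x_i^n;Y/\xx_{i-1}^{(m)}Y)$. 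The paper closes this with two tools absent from your proposal: Lemma \ref{gm+}, a Greenlees--May-type exact sequence showing that $\Lambda^{x_i}(\Lambda^{\xx_{i-1}}(Y))\to\Lambda^{\xx_i}(Y)$ is an isomorphism \emph{if and only if} that double limit vanishes, and Roos' exact sequence $(\#)$ for bi-indexed systems, used in both directions in \ref{apa-0} and in (iii)$\Rightarrow$(v) of \ref{apa-8}. Concretely, two of your steps fail without them. First, your Milnor sequence terminates in $\Lambda^{x_i}(\Lambda^{\xx_{i-1}}(M\otimes_RF))$, whereas condition (ii) asserts an isomorphism onto $\Lambda^{\xx_i}(M\otimes_RF)$; you silently conflate the composite completion with the joint one, and by \ref{gm+} this identification is \emph{equivalent} to the extra vanishing $\varprojlim_n\varprojlim{}^1_m H_1=0$, which must be derived from pro-regularity via Roos in the forward direction and extracted from the surjections $\check{H}^{x_i}_0\twoheadrightarrow\Lambda^{x_i}(\Lambda^{\xx_{i-1}})\twoheadrightarrow\Lambda^{\xx_i}$ in the converse. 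Second, in your (iii)$\Rightarrow$(iv) the hypotheses only give vanishing of $\varprojlim_n$ and $\varprojlim{}^1_n$ of the \emph{iterated} limits $\varprojlim_m H_1$, and recovering $\varprojlim{}^1_n$ of the diagonal system requires Roos' sequence with the indices reversed before Emmanouil's $[T]$-criterion can be applied; the $[T]$-device alone does not bridge the iterated/diagonal discrepancy.
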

	
	 In the final section we apply the previous results to a global situation. To this end 
	we consider a pair $(\mathcal{I},x)$ consisting of an effective Cartier divisor $\mathcal{I} \subseteq R$ 
	and an element $x \in R$ (see \ref{apa-9} for the definitions). We call it pro-regular whenever the
	inverse system $\{H_1(x^n;R/\mathcal{I}^n)\}_{n \geq 1}$ is pro-zero. Then our investigations (see \ref{cor-6}) yield the following:
	
	\begin{corollary} \label{in-3}
		With the previous notation the following conditions are equivalent:
		\begin{itemize}
			\item[(i)] $R/\mathcal{I}$ is of bounded $x$-torsion.
			\item[(ii)]  $(\mathcal{I},x)$ is pro-regular.
			\item[(iii)] $\check{H}_0^x(\Lambda^{\mathcal{I}}(F)) \cong \Lambda^{(x,\mathcal{I})}(F))$ 
			and $\check{H}_1^x(\Lambda^{\mathcal{I}}(F)) = 0$ for any free $R$-module  $F$.
			\item[(iv)] $\Lambda^{\mathcal{I}}(R)$ and $\Lambda^{\mathcal{I}}(R[T])$ are of bounded $x$-torsion. 
		\end{itemize}
	\end{corollary}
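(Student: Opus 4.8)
The plan is to deduce the corollary from Theorem \ref{in-2} by recognising the pair $(\mathcal{I},x)$ as the two-term sequence consisting of a local generator $f$ of the effective Cartier divisor $\mathcal{I}$ followed by $x$, applied to $M=R$. Since $\mathcal{I}$ is an effective Cartier divisor it is invertible, hence locally generated by a single non-zero-divisor $f$; on such an affine chart $\mathcal{I}^n=(f^n)$, the complex $[R\xrightarrow{f^n}R]$ resolves $R/\mathcal{I}^n$, and $\Lambda^{f}=\Lambda^{\mathcal{I}}$. First I would check that for this sequence the index $i=1$ imposes no condition: because $f$ is a non-zero-divisor, $(0:_Rf^n)=0$, so the system $\{H_1(f^n;R)\}$ is pro-zero and the $i=1$ hypotheses of Theorem \ref{in-2} hold automatically. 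The conditions of Theorem \ref{in-2} then collapse to their $i=2$ parts, and unwinding the definitions identifies Theorem \ref{in-2}(i), (ii) and (iv) with, respectively, conditions (ii), (iii) and the $R[T]$-half of (iv) of the corollary; in particular $H_1(x^n;R/\mathcal{I}^n)=(f^n):_Rx^n/(f^n)$, so pro-regularity of $(\mathcal{I},x)$ is exactly the pro-zero condition of Theorem \ref{in-2}(i). This already yields (ii) $\Leftrightarrow$ (iii) $\Leftrightarrow$ $\Lambda^{\mathcal{I}}(R[T])$ of bounded $x$-torsion.

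It remains to link these with condition (i) and with the $\Lambda^{\mathcal{I}}(R)$-half of (iv). For (i) $\Rightarrow$ (ii) I would argue by d\'evissage along $0\to(f^{n-1})/(f^n)\to R/\mathcal{I}^n\to R/\mathcal{I}^{n-1}\to 0$, whose left-hand term is isomorphic to $R/\mathcal{I}$ because $f$ is a non-zero-divisor. Writing the hypothesis as $(f):_Rx^\infty=(f):_Rx^s$, an induction on $k$ gives the uniform bound $(f^k):_Rx^\infty=(f^k):_Rx^{ks}$: if $x^{N}a\in(f^k)$ then $x^{N}a\in(f^{k-1})$, the inductive step yields $x^{(k-1)s}a=f^{k-1}c$, cancelling the non-zero-divisor $f^{k-1}$ reduces to the base case to give $x^{s}c\in(f)$, whence $x^{ks}a\in(f^k)$. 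The transition map $H_1(x^m;R/\mathcal{I}^m)\to H_1(x^n;R/\mathcal{I}^n)$ is multiplication by $x^{m-n}$ followed by reduction modulo $\mathcal{I}^n$; if $x^m a\in(f^m)\subseteq(f^n)$ the uniform bound gives $x^{ns}a\in(f^n)$, so for $m\ge n(s+1)$ one has $x^{m-n}a\in(f^n)$ and the map vanishes. Hence the system is pro-zero. Since $x$ acts coefficientwise, $(R/\mathcal{I})[T]$ is of bounded $x$-torsion if and only if $R/\mathcal{I}$ is, which is how (i) feeds the $R[T]$-statement above.

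The main obstacle is the converse passage back to (i), that is, extracting the single-quotient bound of (i) (equivalently the $\Lambda^{\mathcal{I}}(R)$-half of (iv)) from the tower-level conditions. Here I would use that $S:=\Lambda^{\mathcal{I}}(R)$ is $\hat f$-adically complete with $\hat f$ a non-zero-divisor and $S/\hat fS\cong R/\mathcal{I}$. Applying the Koszul long exact sequence for $x^N$ to $0\to S\xrightarrow{\hat f}S\to R/\mathcal{I}\to 0$ splits $(0:_{R/\mathcal{I}}x^N)$ into a part coming from $(0:_Sx^N)/\hat f(0:_Sx^N)$, annihilated by the torsion bound of $S$, and a part mapping to $(x^NS:_S\hat f)/x^NS$; the delicate point is to control the latter as $N\to\infty$, for which I expect to invoke completeness together with Emmanouil's result on $\varprojlim$ and $\varprojlim^1$ to convert the derived-limit vanishing encoded in bounded $x$-torsion of $\Lambda^{\mathcal{I}}(R[T])$ into the required uniform bound on $R/\mathcal{I}$. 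Finally I would globalize: bounded torsion, the formation of $\mathcal{I}^n/\mathcal{I}^{n+1}$, and $\mathcal{I}$-adic completion are compatible with passing to an affine cover on which $\mathcal{I}$ becomes principal, so the local equivalences glue to the asserted global statement.
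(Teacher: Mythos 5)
Your proposal founders on a point of geometry before the homological algebra even starts: you treat $\mathcal{I}$ as globally principal. By Definition \ref{apa-9}(B) an effective Cartier divisor here is an ideal that is principal only locally along a covering sequence $\ff = f_1,\ldots,f_r$, with $\mathcal{I}R_{f_i} = x_iR_{f_i}$ for nonzerodivisors $x_i/1$; since an invertible ideal need not be free (nontrivial Picard group), there is in general no single nonzerodivisor $f$ with $\mathcal{I} = fR$, so Theorem \ref{in-2} cannot be applied to ``the sequence $f,x$'' at all. Your closing appeal to glueing is exactly where the argument breaks: conditions (iii) and (iv) concern the adic completions $\Lambda^{\mathcal{I}}(F)$ and $\Lambda^{(x,\mathcal{I})}(F)$, and in this non-Noetherian setting completion does not commute with localization (nor does $\Hom_R(\mathcal{L}_x,\cdot)$, which is built from power series modules), so bounded $x/1$-torsion of $\Lambda^{x_iR_{f_i}}(R_{f_i})$ for all $i$ neither implies nor follows formally from bounded $x$-torsion of $\Lambda^{\mathcal{I}}(R)$. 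The paper never makes this reduction: the covering sequence is used only at the elementary level of the colon-module inverse systems, where injectivity of $M \to \oplus_{i=1}^r M_{f_i}$ and the commutative diagrams of Lemma \ref{apa-y} yield (i) $\Leftrightarrow$ (ii) --- your d\'evissage with the uniform bound $(f^k):_Rx^{\infty} = (f^k):_Rx^{ks}$ is in substance the paper's induction in \ref{apa-y}, so that fragment is sound --- while the completion-level conditions are handled globally. This is precisely why Lemma \ref{gm+} and Theorem \ref{apa-0} are formulated for an arbitrary decreasing filtration $\{M_n\}_{n\geq 1}$ rather than for powers of a sequence: they are applied to $M_n = \mathcal{I}^n$, a filtration which is not of the form $\xx^{(n)}R$, and this filtration version is the ingredient your two-element-sequence strategy has no substitute for.

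The second gap is the one you flag yourself: the passage from (iv) back to (i)/(ii). Splitting $0:_{R/\mathcal{I}}x^N$ along $0 \to S \stackrel{\hat f}{\to} S \to R/\mathcal{I} \to 0$ with $S = \Lambda^{\mathcal{I}}(R)$ again presupposes a global generator, and ``invoke completeness together with Emmanouil's result'' is a gesture, not an argument. In the paper this converse is the entire payload of Sections 2 and 4: Corollary \ref{apn-3} (resting on Lemma \ref{apn-5}) converts the vanishing $\varprojlim = \varprojlim^1 = 0$ for the adjoined-variable systems $\{H_1(x^n;\cdot[T])\}$ into the pro-zero property, and the proofs of \ref{apa-0} and \ref{apa-8} shuttle between $\Lambda^{\mathcal{I}}$ and the diagonal system $\{H_1(x^n;R/\mathcal{I}^n)\}_{n\geq 1}$ via Roos' exact sequence $(\#)$ for the bi-indexed system $\{H_1(x^n;X/\mathcal{I}^mX)\}$, with Lemma \ref{gm+} identifying $\Lambda^x(\Lambda^{\mathcal{I}}(X))$ with $\Lambda^{(x,\mathcal{I})}(X)$ and showing the vanishing $\varprojlim_n\varprojlim^1_m H_1(x^n;R/\mathcal{I}^mX)=0$ is necessary as well as sufficient. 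Without these tools your sketch cannot close even in the principal case; with them, the corollary follows as in Lemmas \ref{apa-y} and \ref{apa-10}, no glueing required.
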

	
	As shown in \cite{Sp12} this has applications to prisms in the sense of Bhatt and Scholze (see 
	\cite{BhS}). The equivalent conditions in \ref{in-3} are improvements of the results 
	shown in \cite[Corollary 5.7]{Sp12}. 
	
	In the paper we start with recollections  about inverse limits. In particular we include a different proof of one of 
	Emmanouil's results (see \cite{Ei})
	about  inverse systems needed in the paper. In the third section we prove additional statements 
	about  weakly pro-regular sequences, extending those known before. In section 4 we study pro-regular 
	sequences, continuing the results shown in \cite{Sp12}. Moreover, we prove a necessary and sufficient condition  for the isomorphism  
	$\Lambda^x(\Lambda^{\mathcal{I}}(M)) \cong \Lambda^{(x,\mathcal{I})}(M)$ for an ideal $\mathcal{I} 
	\subset R$ and an element $x \in R$ generalizing a result by Greenlees and May (see 
	\cite[Lemma 1.6]{GM}). 
	Finally in section 5 we study when a
	pair $(\mathcal{I},x)$ consisting of an effective Cartier divisor $\mathcal{I}$ and an element 
	$x \in R$ is pro-regular. Finally we apply these results to prisms in the sense of \cite{BhS} generalizing 
	partial results of \cite{Sp12}. 
	
	In the terminology we follow that of \cite{SS}. In our approach we prefer to work in the category 
	of modules instead of the derived category. For that reason we use a bounded free 
	resolution of the \v{C}ech complex (see \ref{apa-1}).

	\section{Recollections about inverse limits}
	\begin{notation} \label{apn-1}
		(A) Let $R$ denote a commutative ring. Let $\{M_n\}_{n \geq 0}$ be an inverse system of 
		$R$-modules with $\phi_{n,m} : M_m \to M_n$ for all $m \geq n$. Then there is an exact sequence 
		\[
		0 \to \varprojlim M_n \to \prod_{n \geq 0} M_n\stackrel{\Phi}{\longrightarrow} \prod_{n \geq 0} M_n
		\to \varprojlim{}^1 M_n \to 0, 
		\]
		where $\Phi$ denotes the transition map and $ \varprojlim{}^1 M_n$ is the first left derived functor of  the inverse limit  (see e.g. \cite[3.5]{Wc} or \cite[1.2.2]{SS}). \\
		(B) Let $M$ denote an $R$-module. Let $T$ be a variable over $R$. In the following we use $M[|T|]$,  
		the formal power series $R$-module 
		over $M$. That is, the $R$-module $M[|T|]$ consists of all formal series $\sum_{i \geq 0} x_i T^i$ with $x_i \in M$ 
		for all $i \geq 0$. Correspondingly, the $R$-module $M[T]$ consists of all polynomials over $M$. Therefore, 
		$\sum_{i \geq 0} x_i T^i \in M[T]$ if only finitely many $x_i$ are non-zero.  Whence there is an 
		injection $0 \to M[T] \to M[|T|]$ of $R$-modules. \\
		(C) The inverse system $\{M_n\}_{n \geq 0}$  is called pro-zero if for each $n$ there is an integer $m \geq n$ 
		such that the homomorphism $\phi_{n,m}: M_m \to M_n$ is zero. If $\{M_n\}_{n \geq 0}$ is pro-zero, 
		then it is well known that $\varprojlim M_n =  \varprojlim^1 M_n = 0$ since $\Phi$ is an isomorphism (see e.g. \cite[1.2.4]{SS}). \\
		(D) Let $\{M_n\}_{n \geq 0}$  be an inverse system. Then clearly $\im \phi_{n,m'} \subseteq \im\phi_{n,m} 
		\subseteq M_n$ for all $m' \geq m\geq n$. We say that  $\{M_n\}_{n \geq 0}$  
		satisfies the \textit{Mittag-Leffler condition} if for each $n$ the sequence of submodules 
		$\{\im \phi_{n,m} | m \geq n\}$ stabilizes. For instance, this holds if the maps $\phi_{n,m}$ 
		are surjective or $\{M_n\}_{n\geq 0}$ is an inverse system of Artinian $R$-modules. It is well-known that 
		$\varprojlim^1 M_n = 0$ if  $\{M_n\}_{n \geq 0}$ satisfies 
		the Mittag-Leffler condition (see e.g. \cite[1.2.3]{SS}). 
	\end{notation}
	
	For more details about inverse systems we refer to Jensen's exposition in \cite{Jcu} and to \cite{Ei}. 
	It is remarkable that the vanishing in \ref{apn-1} (C) does not imply that 
	$\{M_n\}_{n \geq 0}$ is pro-zero.  
	To this end see the example \cite[1.2.5]{SS} or the following  generalization: 
	
	\begin{example} \label{apn-2}
		Let $(R,\mathfrak{m})$ denote a complete local Noetherian ring 
		with $x \in R$ a non-unit. We consider the direct system $\{R_n\}_{n \geq 0}$ with $R_n = R$ 
		and $\psi_{n,n+1}: R_n \to R_{n+1}$ the multiplication by $x$. Then $\varinjlim R_n \cong R_x$ and 
		there is a short exact sequence 
		$$
		0 \to \oplus_{n \geq 0} R_n \to \oplus_{n \geq 0}R_n  \to R_x  \to 0.
		$$
		Now we apply $\Hom_R(\cdot,R)$ and obtain the inverse system $\{M_n\}_{n \geq 0}$ with 
		$M_n = \Hom_R(R_n,R)$ and 
		with the multiplication $M_{n+1} \stackrel{x}{\to} M_n$. By applying $\Hom_R(\cdot,R)$ 
		to the previous short exact sequence it yields the exact sequence
		\[
		0 \to \Hom_R(R_x,R) \to \prod_{n \geq 0} M_n \to  \prod_{n \geq 0} M_n \to 
		\Ext_R^1(R_x,R) \to 0.
		\]
		Since $R$ is also $xR$-complete 
		$\varprojlim M_n = \Hom_R(R_x,R) = 0$ and $\varprojlim^1 M_n = \Ext_R^1(R_x,R) = 0$ 
		(see \cite[3.1.10]{SS})
		while the inverse system $\{M_n\}_{n \geq 0}$ is neither pro-zero nor satisfies the Mittag-Leffler condition. 
	\end{example}
	
	In the following we shall discuss necessary and  sufficient conditions for an inverse system 
	to be pro-zero.  This extends known results. We need a technical construction.
	
	\begin{remark} \label{apn-4}
		An $R$-module $M$ induces a short exact sequence 
		\[
		0 \to M[T] \stackrel{T}{\longrightarrow} M[T] \to M \to 0,
		\]
		where $T$ denote the shift operator defined by 
		$\sum_{n \geq 0}^{k} x_n T^n \mapsto \sum_{n \geq 0}^{k} x_nT^{n+1}$.
		The inverse system $\{M_n\}_{n \geq 0}$ induces a short exact sequence of inverse systems 
		\[
		0 \to \{M_n[T]\}_{n \geq 0} \stackrel{T}{\longrightarrow} \{M_n[T]\}_{n \geq 0} \to 
		\{M_n\}_{n \geq 0} \to 0,
		\]
		induced by the shift operator.  Then we have the six-term long exact sequence associated 
		to the inverse limit 
		\[
		0 \to  \varprojlim M_n[T] \to  \varprojlim M_n[T] \to  \varprojlim M_n \to 
		\varprojlim{}^1M_n[T] \to  \varprojlim{}^1 M_n[T] \to  \varprojlim{}^1 M_n \to 0
		\]
		(see e.g. \cite[1.2.2]{SS}) . 	
	\end{remark}

	By the Example \ref{apn-2} it follows that the vanishing of $\varprojlim^1 M_n$ is 
	necessary but not sufficient for the Mittag-Leffler condition of the inverse system $\{M_n\}_{n \geq 0}$. 
	A characterization of the Mittag-Leffler condition was shown by Emmanouil (see \cite{Ei}). 
	For our purposes we recall  part of Emmanouil's result (see \cite[Corollary 6]{Ei}). 
	In our argument we use a certain  exact sequence (see the proof of \ref{apn-5})
	and modify an idea  of \cite[tag 0CQA]{stacks} as new ingredients.
	
	\begin{lemma} \label{apn-5}
		Let $\{M_n\}_{n \geq 0}$ denote an inverse system of $R$-modules. Then the 
		following conditions are equivalent: 
		\begin{itemize}
			\item[(i)] $\{M_n\}_{n \geq 0}$ satisfies the Mittag-Leffler condition. 
			\item[(ii)]  $\{M_n[T]\}_{n \geq 0}$ satisfies the Mittag-Leffler condition. 
			\item[(iii)] $\varprojlim^1 M_n = 0$  and $\varprojlim^1 M_n[T] = 0$.
			\item[(iv)] $\varprojlim^1 M_n[T] = 0$.
		\end{itemize}
	\end{lemma}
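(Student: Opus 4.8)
The plan is to prove the four conditions equivalent by running the cycle (i) $\Rightarrow$ (ii) $\Rightarrow$ (iii) $\Rightarrow$ (iv) $\Rightarrow$ (i), isolating all the genuine content in the last implication. For (i) $\Rightarrow$ (ii) I would observe that the transition maps of $\{M_n[T]\}_{n\ge 0}$ act coefficientwise, so that $\im(\phi_{n,m}\colon M_m[T]\to M_n[T])$ is precisely the submodule of polynomials with coefficients in $\im\phi_{n,m}\subseteq M_n$; hence this descending chain of images stabilises in $M_n[T]$ if and only if $\{\im\phi_{n,m}\}_m$ stabilises in $M_n$, which is the Mittag-Leffler condition for $\{M_n\}_{n\ge 0}$. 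For (ii) $\Rightarrow$ (iii) I would invoke \ref{apn-1} (D) to pass from the Mittag-Leffler condition of $\{M_n[T]\}_{n\ge 0}$ to $\varprojlim^1 M_n[T]=0$, and then read off $\varprojlim^1 M_n=0$ from the six-term sequence of \ref{apn-4}, in which $\varprojlim^1 M_n$ occurs as a quotient of $\varprojlim^1 M_n[T]$. The implication (iii) $\Rightarrow$ (iv) is trivial.

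The heart of the matter is (iv) $\Rightarrow$ (i), which I would prove by contraposition, adapting the construction of \cite[tag 0CQA]{stacks}: assuming $\{M_n\}_{n\ge 0}$ fails the Mittag-Leffler condition, I shall exhibit an element of $\prod_{n\ge 0} M_n[T]$ not lying in the image of the transition map $\Phi$ of \ref{apn-1} (A) for the system $\{M_n[T]\}_{n\ge 0}$, so that $\varprojlim^1 M_n[T]\neq 0$. Failure of Mittag-Leffler gives an index $n_0$ for which the descending chain $\{\im\phi_{n_0,m}\}_{m\ge n_0}$ does not stabilise, so I can choose $n_0=m_0<m_1<m_2<\cdots$ with $\im\phi_{n_0,m_{i+1}}\subsetneq\im\phi_{n_0,m_i}$, together with elements $z_i\in M_{m_i}$ whose images $y_i=\phi_{n_0,m_i}(z_i)$ satisfy $y_i\notin\im\phi_{n_0,m_{i+1}}$. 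The idea is to use the variable $T$ to spread these obstructions across distinct degrees: set $b_{m_i}=z_iT^i$ and $b_n=0$ for $n\notin\{m_i\}$.

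If $(b_n)$ were of the form $\Phi((a_n))$ with $a_n\in M_n[T]$, then telescoping the relations $a_n-\phi_{n,n+1}(a_{n+1})=b_n$ would give, for every $N>m_j$, the identity $a_{n_0}=\sum_{i\colon m_i<N}y_iT^i+\phi_{n_0,N}(a_N)$ after applying the composite transition maps to the $b_k$. Comparing the coefficient of $T^j$ on both sides yields $a_{n_0}^{(j)}-y_j\in\im\phi_{n_0,N}$ for all $N>m_j$; taking $N=m_{j+1}$ gives $a_{n_0}^{(j)}-y_j\in\im\phi_{n_0,m_{j+1}}$. Since $a_{n_0}$ is a polynomial, its coefficient $a_{n_0}^{(j)}$ vanishes for all large $j$, forcing $y_j\in\im\phi_{n_0,m_{j+1}}$ for such $j$ and contradicting the choice of $y_j$. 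Hence no such $(a_n)$ exists and $\Phi$ is not surjective.

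I expect the main obstacle to be exactly this last construction: one must arrange the obstructions $y_i$ so that the formal solution $a_{n_0}$ is forced to be the genuine power series $\sum_j y_jT^j$ rather than a polynomial, which is where the distinction between $M[T]$ and $M[|T|]$ recorded in \ref{apn-1} (B) does the work. The bookkeeping with the composite maps $\phi_{n_0,k}$ and the reduction to a single fixed degree $j$ are the delicate points; everything else reduces to the already-recorded facts that the Mittag-Leffler condition implies $\varprojlim^1=0$ and to the six-term sequence of \ref{apn-4}.
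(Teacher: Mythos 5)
Your proof is correct, and its combinatorial heart is the same device the paper uses (both adapt \cite[tag 0CQA]{stacks}): when the chain $\{\im \phi_{n_0,m}\}_{m \geq n_0}$ fails to stabilize, spread witnesses $y_i \in \im \phi_{n_0,m_i} \setminus \im \phi_{n_0,m_{i+1}}$ across distinct powers $T^i$ and exploit that a polynomial has only finitely many nonzero coefficients. The packaging, however, is genuinely different. The paper proves (iii) $\Longrightarrow$ (i) by introducing the power-series system $\{M_n[|T|]\}_{n \geq 0}$, passing to the inverse limit of $0 \to M_n[T] \to M_n[|T|] \to M_n[|T|]/M_n[T] \to 0$, and exhibiting a class $F' \in \varprojlim M_n[|T|]/M_n[T]$ (built from $f_n = \sum_{i \geq n} \phi_{n,m_i}(x_i)T^i$) admitting no lift to $\varprojlim M_n[|T|]$, which contradicts $\varprojlim{}^1 M_n[T] = 0$ through the connecting map of that four-term sequence. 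You instead prove (iv) $\Longrightarrow$ (i) directly: you never introduce $M_n[|T|]$ as an inverse system, but show the map $\Phi$ of \ref{apn-1}\,(A) for $\{M_n[T]\}_{n\geq 0}$ is not surjective by telescoping $a_n - \phi_{n,n+1}(a_{n+1}) = b_n$ with $b_{m_i} = z_iT^i$, which forces $a_{n_0}^{(j)} - y_j \in \im \phi_{n_0,m_{j+1}}$ for every $j$ and hence $y_j \in \im \phi_{n_0,m_{j+1}}$ once $j$ exceeds $\deg a_{n_0}$, a contradiction; this is sound, since the telescoped identity and the coefficient comparison are exactly as you state. (Your cycle (i)$\Rightarrow$(ii)$\Rightarrow$(iii)$\Rightarrow$(iv)$\Rightarrow$(i) also rearranges the paper's scheme, which runs (i)$\Rightarrow$(ii)$\Rightarrow$(iv), (iii)$\Leftrightarrow$(iv) via the six-term sequence of \ref{apn-4}, and (iii)$\Rightarrow$(i); both close correctly, and in fact the paper, like you, really only consumes the vanishing of $\varprojlim{}^1 M_n[T]$ in its final contradiction.) What your route buys is self-containedness: no power-series modules and no appeal to left-exactness of $\varprojlim$ for the quotient system $\{M_n[|T|]/M_n[T]\}$. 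What the paper's route buys is that the hidden ``power-series solution'' $\sum_j y_jT^j$, which in your argument exists only implicitly as the obstruction to $a_{n_0}$ being a polynomial, appears explicitly as the class $F'$, making structural the role of the inclusion $M[T] \subset M[|T|]$ recorded in \ref{apn-1}\,(B). A cosmetic difference: the paper's witnesses avoid $\im \phi_{m,m_i+1}$ (one step past $m_i$), yours avoid $\im \phi_{n_0,m_{j+1}}$ (the next chosen index); both choices are available when stabilization fails and both suffice.
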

	
	\begin{proof}
		(i) $\Longrightarrow$ (ii): This follows since the inverse system $\{M_n[T]\}_{n \geq 0} $ satisfies 
		the Mittag-Leffler condition too. \\
		(ii) $\Longrightarrow$ (iv): This holds trivially. \\
		(iii) $\Longleftrightarrow$ (iv): This is a consequence of the six-term exact sequence in \ref{apn-4}.\\
		(iii) $\Longrightarrow$ (i): 
		The injections $0 \to M_n[T] \to M_n[|T|]$ induce a short exact 
				sequence of inverse systems 
				\[
				0 \to  \{M_n[T]\}_{n \geq 0} \to \{M_n[|T|]\}_{n \geq 0}  \to  \{M_n[|T|]/ M_n[T]\}_{n \geq 0}  \to 0.
				\]
				By passing to the inverse limit it provides an exact sequence 
				\[
				0 \to \varprojlim M_n[T] \to  \varprojlim M_n[|T|]  \to  \varprojlim M_n[|T|]/M_n[T]  \to  
				\varprojlim{}^1M_n[T].  
				\]
		Now suppose that $\{M_n\}_{n \geq 0}$ does not satisfy the Mittag-Leffler condition. 
		Then there is an integer $m$ such that the sequence 
		of submodules $\{\im \phi_{m,k} | k \geq m\}$ of $M_m$ does not stabilize. Whence 
		there is an infinite  sequence $m = m_0<  m_1 < \ldots <m_i < \ldots $ 
		and elements $x_i \in M_{m_i}$ such that $ \phi_{m,m_i}(x_i) \in M_m \setminus \phi_{m,m_i+1}(M_{m_i+1})$. 
		Now we define $F = (f_n)_{n \geq 0}  \in \prod_{n \geq 0} M_n[|T|]$ with $f_n = \sum_{i \geq n}  z_{n,i} T^i$ where 
		we put 
		\[
		z_{n,i} = \begin{cases}
			\phi_{n,m_i}(x_i) & if \; m_i \geq n \\
			0 & else. 
		\end{cases}
		\]
		As easily seen $f_n -\phi_{n,n+1}(f_{n+1}) \in M_n[T]$ and $F$ defines an element $F' \in 
		\varprojlim M_n[|T|]/M_n[T]$. Suppose $F'$ has a preimage $G = (g_n)_{n \geq 0} \in \varprojlim M_n[|T|]$ 
		with $g_n = \sum_{i \geq 0} y_{n,i}T^i$ and  $y_{n,i} \in M_n$ for all $i \geq 0$. We have that $y_{n,i} 
		= \phi_{n,n+k} (y_{n+k,i})$ for all $k,i \geq 0$ and therefore $y_{n,i} \in \phi_{n,n+k}(M_{n+k})$.  That is, 
		$y_{m,i} \in \phi_{m,m_i+1}(M_{m_i+1})$ and $y_{m,i} \not= \phi_{m,m_i}(x_i) $ since 
		$ \phi_{m,m_i}(x_i) \in M_m \setminus \phi_{m,m_i+1}(M_{m_i+1})$. Therefore 
		\[
		f_m -g_m = \sum_{i \geq 0}(\phi_{m,m_i}(x_i)  - y_{m,i}) T^i \not\in M_m[T]    
		\]
		and $G$ can not be a preimage of $F'$, a contradiction to the vanishing of $\varprojlim{}^1M_n[T]$.
	\end{proof}

	As a consequence of \ref{apn-5} a characterization of pro-zero inverse systems follows. The 
	vanishing $\varprojlim M_n = \varprojlim{}^1M_n =0$ is not sufficient for $\{M_n\}_{n \geq 1}$ being pro-zero 
	(see \ref{apn-2}). As shown next it follows by the vanishing $\varprojlim M_n[T] = \varprojlim{}^1M_n[T]=0$ (see \ref{apn-3}). For the proof we modify Weibel's argument (see the proof \cite[3.5.7]{Wc}).
	For an $R$-module $M$ and a set $S$ we define 
	$M^{(S)} = \oplus_{s\in S} M_s$ with $M_s = M$. Then it is clear that conditions (iii) and (iv) 
	hold also for the inverse system $\{(M_n)^{(S)}\}_{n \geq 0}$ when they hold for $\{M_n\}_{n \geq 0}$. 
		 
	\begin{corollary} \label{apn-3}
		Let $\{M_n\}_{n \geq 0}$ denote an inverse system of $R$-modules. Then the 
		following conditions are equivalent:
		\begin{itemize}
			\item[(i)] $\{M_n\}_{n \geq 0}$ is pro-zero. 
			\item[(ii)] $\{M_n[T]\}_{n \geq 0}$ is pro-zero. 
			\item[(iii)] $\varprojlim M_n =  \varprojlim^1 M_n = 0$  and $\varprojlim M_n[T]=  \varprojlim^1 M_n[T] = 0$. 
			\item[(iv)]$\varprojlim M_n[T]=  \varprojlim^1 M_n[T] = 0$. 
		\end{itemize}
	\end{corollary}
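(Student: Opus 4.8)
The plan is to run the cycle of implications (i)$\Rightarrow$(ii)$\Rightarrow$(iv)$\Rightarrow$(i) together with the equivalence (iii)$\Leftrightarrow$(iv), using Lemma \ref{apn-5} to supply the Mittag-Leffler information and the six-term sequence of \ref{apn-4} to pass between $M_n$ and $M_n[T]$. The routine implications come first. For (i)$\Rightarrow$(ii): if for each $n$ there is $m\geq n$ with $\phi_{n,m}=0$, then the transition map $M_m[T]\to M_n[T]$ induced by $\phi_{n,m}$ acts coefficientwise and is therefore also zero, so $\{M_n[T]\}_{n\geq 0}$ is pro-zero. For (ii)$\Rightarrow$(iv), a pro-zero system has vanishing $\varprojlim$ and $\varprojlim^1$ by \ref{apn-1}(C), applied to $\{M_n[T]\}_{n\geq 0}$. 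The implication (iii)$\Rightarrow$(iv) is just a forgetting of hypotheses, while (iv)$\Rightarrow$(iii) is read off the six-term exact sequence of \ref{apn-4}: in that sequence $\varprojlim M_n$ receives a map from $\varprojlim M_n[T]=0$ and maps into $\varprojlim^1 M_n[T]=0$, so $\varprojlim M_n=0$ by exactness, while $\varprojlim^1 M_n$ is a quotient of $\varprojlim^1 M_n[T]=0$; hence both vanish.

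The substance is (iv)$\Rightarrow$(i). From $\varprojlim^1 M_n[T]=0$ I invoke Lemma \ref{apn-5} to conclude that $\{M_n\}_{n\geq 0}$ satisfies the Mittag-Leffler condition; write $N_n=\im\phi_{n,m}$ (for $m\gg 0$) for the stabilized images. By the already-established (iv)$\Rightarrow$(iii) I also have $\varprojlim M_n=0$. Now $\phi_{n,n+1}(N_{n+1})=\im(\phi_{n,n+1}\phi_{n+1,m})=\im\phi_{n,m}=N_n$ for $m\gg 0$, so the transition maps restrict to surjections $N_{n+1}\twoheadrightarrow N_n$ and $\{N_n\}_{n\geq 0}$ is a surjective tower with $\varprojlim N_n=\varprojlim M_n=0$. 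Since the projection from the limit of a surjective tower onto each of its terms is itself surjective, every $N_n$ vanishes. Thus $\im\phi_{n,m}=0$, i.e.\ $\phi_{n,m}=0$, for all $m\gg 0$, which is precisely pro-zeroness of $\{M_n\}_{n\geq 0}$. This closes the cycle and identifies all five conditions.

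I expect the surjective-tower step to be the main obstacle. The point is that the bare vanishing $\varprojlim M_n=\varprojlim^1 M_n=0$ is insufficient for pro-zero, as Example \ref{apn-2} shows; what rigidifies the situation is the extra control on $M_n[T]$, which through Lemma \ref{apn-5} delivers the Mittag-Leffler condition and hence the stabilized surjective tower $\{N_n\}$. Extracting and working with this tower is the module-theoretic form of Weibel's telescoping argument, and it is compatible with the reduction to the direct-sum systems $\{(M_n)^{(S)}\}_{n\geq 0}$ noted before the statement, since those inherit conditions (iii) and (iv) and their stabilized images are $(N_n)^{(S)}$.
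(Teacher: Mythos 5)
Your proof is correct and follows essentially the same route as the paper: the easy implications (i)$\Rightarrow$(ii)$\Rightarrow$(iv) and the six-term sequence of \ref{apn-4} for (iii)$\Leftrightarrow$(iv), then Lemma \ref{apn-5} to extract the Mittag-Leffler condition, the stabilized surjective tower $\{N_n\}$, and the surjectivity of $\varprojlim N_n \to N_n$ to force $N_n=0$ (the paper phrases the vanishing $\varprojlim N_n = \varprojlim M_n = 0$ via pro-zeroness of $\{M_n/N_n\}$, but this is the same Weibel-style argument). No gaps.
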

	
	\begin{proof}
		(i) $\Longrightarrow$ (ii): Because $\{M_n\}_{n \geq 0}$  is pro-zero this holds also for the 
		induced inverse system $\{M_n[T]\}_{n \geq 0}$ as easily seen.\\
		(ii) $\Longrightarrow$ (iv): This is obviously true because  $\{M_n[T]\}_{n \geq 0}$ is pro-zero.
		\\
		(iii) $\Longleftrightarrow$ (iv): This is a consequence of the six-term exact sequence in \ref{apn-4}.\\
			(iii) $\Longrightarrow$ (i):  By view of \ref{apn-5} the inverse system $\{M_n\}_{n \geq 1}$ 
			satisfies the Mittag-Leffler condition. We define $N_n = \im \phi_{n,m}$ where $m = m(n)$ is choosen such that $\{ \im \phi_{n,k}\}_{k \geq n}$ becomes stable. Then $\{N_n\}_{n \geq 1}$ becomes  an inverse system with surjective maps. Because  the inverse system $\{M_n/N_n\}_{n \geq 1}$ is pro-zero  
			the exact sequence $0 \to N_n \to M_n \to M_n/N_n \to 0$ implies 
			$\varprojlim N_n = \varprojlim M_n =0$ and therefore $N_n = 0$. 
	\end{proof}

\section{Weakly pro-regular sequences}
	We start with a few recalls of results and definitions of \cite{SS} and \cite{Sp11}. As above $R$ denotes a 
	commutative ring. 
	
\begin{notation} \label{apa-1}
		(A) For a system of elements $\xx = x_1,\ldots,x_r$ of $R$ let $\check{C}_{\xx}$ denote the \v{C}ech complex 
			\[
		\check{C}_{\xx} :=  \check{C}_{x_1} \otimes_R \cdots \otimes_R \check{C}_{x_r},
		\]
		where $ \check{C}_{x_i} : 0 \to R \to R_{x_i} \to 0$ 
		(see e.g. \cite{Ga2} or  \cite[6.1]{SS}). In the following we look at the complex ${\rm{R}} \Hom_R(\check{C}_{\xx},M)$ 
		for an $R$-module $M$ in the derived category. By virtue of  \cite{GM} there  is a finite free resolution of $\check{C}_{\xx}$.  We follow here the one $\mathcal{L}_{\xx}$ as given in \cite{Sp11}. Whence 
		$\Hom_R(\mathcal{L}_{\xx},M)$ is a representative 
		of ${\rm{R}} \Hom_R(\check{C}_{\xx},M)$. Define the \v{C}ech homology 
		$\check{H}^{\xx}_i(M) = H^{-i}(\Hom_R(\mathcal{L}_{\xx},M))$ and the \v{C}ech cohomology $\check{H}_{\xx}^i(M) = 
		H^i(\mathcal{L}_{\xx} \otimes_RM)$ for all $i \in \mathbb{Z}$ (see \cite{SS} and \cite{Sp11} for more details). \\
		(B) Let $\UU =U_1,\ldots,U_r$ denote a sequence of $r$ variables over $R$. For an 
		$R$-module $M$  we denote, as above, by $M[|\UU|]$ the module of formal power series in the 
		variables $\UU$. Clearly $M[|\UU|] = \varprojlim M[\UU]/\UU^{(n)} M[\UU]$, where 
		$\UU^{(n)} = U_1^n, \ldots,U_r^n$ and $M[\UU]$ is the polynomial module over $M$. 
		For the sequence  $\xx = x_1,\ldots,x_r$ we define the sequence $\xx -\UU = x_1-U_1,\ldots,x_r -U_r$. 
		As one of the main results of the paper \cite[Section 8]{SS} the following isomorphisms are shown 
		\[
		\Hom_R(\mathcal{L}_{\xx},M) \cong K_{\bullet}(\xx-\UU; M[|\UU|]) \cong 
		\varprojlim K_{\bullet}(\xx-\UU; M[\UU]/\UU^{(n)} M[\UU]), 
		\]
		where $ K_{\bullet}(\xx-\UU;\cdot)$ denotes the Koszul complex with respect to the sequence $\xx-\UU$. Moreover there are isomorphisms 
		\[
		\mathcal{L}_{\xx} \otimes_RM \cong K^{\bullet}(\xx-\UU; M[\UU^{-1}]) \cong 
		\varinjlim K^{\bullet}(\xx-\UU; M[\UU]/\UU^{(n)} M[\UU]),
		\]
		where $M[\UU^{-1}]$ denotes the module of inverse polynomials and $K^{\bullet}(\xx-\UU; \cdot)$ 
		is the Koszul co-complex (see  \cite[4.1]{Sp11} for all of the details).
	\end{notation}
	
	In the following there is  technical result for the computation of 
	$ \check{H}^{\xx}_i(M)$ and $ \check{H}_{\xx}^i(M)$ resp. 
	
	\begin{lemma} \label{apa-2}
		We fix the notation of \ref{apa-1}. Furthermore let $\xx^{(n)} = x_1^n,\ldots,x_r^n$ and let  
		$H_i(\xx^{(n)};M)$ denote the Koszul homology and $H^i(\xx^{(n)}; M)$ the Koszul cohomology.
		\begin{itemize}
			\item[(a)]  There are isomorphisms $\check{H}^i_{\xx}(M) \cong \varinjlim H^i(\xx^{(n)};M)$ 
			and short exact sequences 
			\[
			0 \to \varprojlim{}^1 H_{i+1}(\xx^{(n)};M) \to \check{H}^{\xx}_i(M)  \to \varprojlim H_i(\xx^{(n)};M) \to 0,
			\]
			for all $i\in \mathbb{Z}$.
			\item[(b)] For $i > 0$ we have $ \check{H}^{\xx}_i(M) = 0$ if and only if
			$ \varprojlim{}^1 H_{i+1}(\xx^{(n)};M) = \varprojlim H_i(\xx^{(n)};M) = 0$ and 
			$\check{H}^{\xx}_0(M) \cong \Lambda^{\xx}(M)$ if and only if $ \varprojlim{}^1 H_1(\xx^{(n)};M) =0$. 
		\end{itemize}
	\end{lemma}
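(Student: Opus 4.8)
The plan is to read off both parts from the two descriptions of $\Hom_R(\mathcal{L}_{\xx},M)$ and $\mathcal{L}_{\xx}\otimes_R M$ recorded in \ref{apa-1}(B), namely as an inverse limit resp. a direct limit of Koszul (co-)complexes on the sequence $\xx-\UU$ over the modules $M[\UU]/\UU^{(n)}M[\UU]$. Writing $K_{\bullet}^{(n)} := K_{\bullet}(\xx-\UU;\,M[\UU]/\UU^{(n)}M[\UU])$ and likewise $K^{\bullet,(n)}$ for the co-complex, the whole argument rests on one computation that I regard as the heart of the matter: for every $n$ there are isomorphisms
\[
\begin{aligned}
H_i\bigl(K_{\bullet}(\xx-\UU;\,M[\UU]/\UU^{(n)}M[\UU])\bigr) &\cong H_i(\xx^{(n)};M),\\
H^i\bigl(K^{\bullet}(\xx-\UU;\,M[\UU]/\UU^{(n)}M[\UU])\bigr) &\cong H^i(\xx^{(n)};M),
\end{aligned}
\]
and these are compatible with the respective transition maps, so that they assemble into isomorphisms of the inverse system $\{H_i(\xx^{(n)};M)\}$ resp. the direct system $\{H^i(\xx^{(n)};M)\}$.

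For the cohomology statement in (a) I would use that the direct limit functor is exact and hence commutes with cohomology. Applied to $\mathcal{L}_{\xx}\otimes_R M\cong\varinjlim K^{\bullet,(n)}$ this gives $\check{H}^i_{\xx}(M)=H^i(\mathcal{L}_{\xx}\otimes_R M)\cong\varinjlim H^i(K^{\bullet,(n)})$, and the compatible identification above turns the right-hand side into $\varinjlim H^i(\xx^{(n)};M)$. For the homology statement, inverse limits are no longer exact, which is precisely where the $\varprojlim{}^1$ term enters. The transition maps of $\{M[\UU]/\UU^{(n)}M[\UU]\}$ are the canonical surjections, so the induced inverse system $\{K_{\bullet}^{(n)}\}$ has surjective transition maps in every degree; in particular its degreewise $\varprojlim{}^1$ vanishes. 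Feeding the exact sequence of \ref{apn-1}(A) into this system and taking the long exact homology sequence of the resulting short exact sequence $0\to\varprojlim K_{\bullet}^{(n)}\to\prod K_{\bullet}^{(n)}\to\prod K_{\bullet}^{(n)}\to 0$ yields the standard sequence
\[
0\to\varprojlim{}^1 H_{i+1}\bigl(K_{\bullet}^{(n)}\bigr)\to H_i\bigl(\varprojlim K_{\bullet}^{(n)}\bigr)\to\varprojlim H_i\bigl(K_{\bullet}^{(n)}\bigr)\to 0.
\]
Since $H_i(\varprojlim K_{\bullet}^{(n)})=\check{H}^{\xx}_i(M)$ by \ref{apa-1}(B) and the compatible identification replaces $H_j(K_{\bullet}^{(n)})$ by $H_j(\xx^{(n)};M)$, this is exactly the asserted sequence.

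Part (b) is then a formal consequence. For $i>0$ the three-term exactness forces $\check{H}^{\xx}_i(M)=0$ if and only if both flanking terms $\varprojlim{}^1 H_{i+1}(\xx^{(n)};M)$ and $\varprojlim H_i(\xx^{(n)};M)$ vanish. For $i=0$ one notes that $H_0(\xx^{(n)};M)=M/\xx^{(n)}M$, whose inverse system has the canonical reductions as transition maps and whose limit is $\varprojlim M/\xx^{(n)}M\cong\Lambda^{\xx}(M)$, the $\xx$-adic completion (the ideals $\xx^{(n)}R$ being cofinal with the powers $(\xx R)^n$). Thus the sequence reads
\[
0\to\varprojlim{}^1 H_1(\xx^{(n)};M)\to\check{H}^{\xx}_0(M)\to\Lambda^{\xx}(M)\to 0,
\]
and the natural surjection $\check{H}^{\xx}_0(M)\to\Lambda^{\xx}(M)$ is an isomorphism precisely when its kernel $\varprojlim{}^1 H_1(\xx^{(n)};M)$ vanishes.

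Finally, I expect the main obstacle to be the compatibility of the Koszul identification with the transition maps. The underlying single-variable computation is elementary: modulo $U_i^n$ one may substitute $U_i=x_i$, which identifies the cokernel of $x_i-U_i$ with $M/x_i^n M$ and its kernel with $(0:_M x_i^n)$, and the multivariable case follows from the tensor-product structure of the Koszul complex. What needs genuine care is checking that under these identifications the canonical projection $M[\UU]/\UU^{(m)}M[\UU]\to M[\UU]/\UU^{(n)}M[\UU]$ induces exactly the standard Koszul transition maps --- the reductions in degree $0$ and multiplication by the appropriate powers of the $x_i$ in higher degree --- so that the limits computed above are precisely those appearing in the statement; this bookkeeping, essentially already present in \cite{SS} and \cite{Sp11}, is the only non-formal ingredient.
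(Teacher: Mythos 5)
Your proposal is correct and follows essentially the same route as the paper: the paper disposes of (a) by citing \cite[6.1.4, 8.1.7]{SS} and \cite[5.6]{Sp11}, and your argument is precisely a reconstruction of that cited proof from the limit/colimit presentations of $\Hom_R(\mathcal{L}_{\xx},M)$ and $\mathcal{L}_{\xx}\otimes_RM$ recorded in \ref{apa-1}(B), including the degreewise-surjective Milnor $\varprojlim$--$\varprojlim^1$ sequence and the identification $H_j(K_{\bullet}(\xx-\UU;M[\UU]/\UU^{(n)}M[\UU]))\cong H_j(\xx^{(n)};M)$ compatible with transition maps. Part (b), which you derive as a formal consequence of the exact sequences in (a) together with $\varprojlim M/\xx^{(n)}M\cong\Lambda^{\xx}(M)$ and the naturality of the surjection $\check{H}^{\xx}_0(M)\to\Lambda^{\xx}(M)$, is exactly how the paper treats it.
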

	
	\begin{proof}
		For the proof of (a) we refer to \cite[6.1.4, 8.1.7]{SS} or \cite[5.6]{Sp11}.  Then (b) is a consequence of the 
		exact sequences in (a).
	\end{proof}
	
	Next we shall give a further characterization for an element $x \in R$ such that an $R$-module $M$ 
	is of bounded $x$-torsion. 
	
	\begin{definition} \label{apa-3}
		(A) Let $M$ denote an $R$-module and $x \in R$ an element. Then $M$ is called of bounded 
		$x$-torsion if the family of increasing submodules $\{0:_M x^n\}_{n \geq 0}$ stabilizes, that is 
		$$
		0:_M x^n = 0:_M x^{n+1} \textrm{ for all } n \gg 0.
		$$ 
		Note that this is equivalent to the fact 
		that the inverse system $\{0:_M x^n\}_{n \geq 0}$ with the multiplication map $0:_M x^m 
		\stackrel{x^{m-n}}{\longrightarrow} 0 :_M x^n, m \geq n,$ being pro-zero. \\
		(B) It is obvious that $M$ is of bounded $x$-torsion if and only if the inverse system of 
		Koszul homology modules $\{H_1(x^n;M)\}_{n \geq 0}$ with the multiplication map 
		$H_1(x^m;M) \stackrel{x^{m-n}}{\longrightarrow} H_1(x^n;M)$ is pro-zero. With this in mind 
		Lipman (see \cite{Lj1}) introduced the generalization of a weakly pro-regular sequence for a 
		ring $R$. For a  generalization 
		to an $R$-module $M$ see \cite[7.3.1]{SS}. That is, a sequence $\xx = x_1,\ldots,x_r$ 
		is called $M$-weakly pro-regular, if for $i > 0$ the inverse system $\{H_i(\xx^{(n)};M)\}_{n \geq 0}$ 
		is pro-zero, where $H_i(\xx^m;M) \to H_i(\xx^n;M), m \geq n,$ denotes the natural map induced by the 
		Koszul complexes.  A first systematic study of $R$-weakly pro-regular sequences has been done in 
		\cite{Sp2}. 
	\end{definition}
	
	For a characterization of $M$-weakly pro-regular sequences see \cite{Sp11}. In fact, this is an extension of 
	$R$-weakly pro-regular sequences shown in \cite{PSY} which extended the results of \cite{Sp2} to unbounded complexes.  Here we shall prove another characterization of 
	$M$-weakly pro-regular sequences. It is  a slight  extension of Potsitselski's result 
	see \cite[Section 3]{Pl}) to the case of an $R$-module $M$. As above, for an $R$-module $M$ and a 
	set $S$ we define $M^{(S)} = \oplus_{s\in S} M_s$ 
	with $M_s = M$.  Note that $M[T] \cong M^{(\mathbb{N})}$. 
	Moreover, $\Lambda^{\xx}(M)  = \widehat{M}^{\xx} = \varprojlim M/\xx^{(n)}M$ 
	denotes the $\xx R$-adic completion of an $R$-module $M$.

	\begin{theorem} \label{apa-4}
		Let $\xx = x_1,\ldots,x_r$ denote a sequence of elements of $R$. For an $R$-module $M$ the following 
		conditions are equivalent:
		\begin{itemize}
			\item[(i)] $\xx$ is $M$-weakly pro-regular.
			\item[(ii)] For any set $S$ it holds $\check{H}^{\xx}_i(M^{(S)}) = 0$ for all $i > 0$ and 
			$\check{H}^{\xx}_0(M^{(S)}) = \Lambda^{\xx}(M^{(S)})$. 
			\item[(iii)]  $\check{H}^{\xx}_i(M[T]) = \check{H}^{\xx}_i(M) = 0$ for all $i > 0$ and 
			$\check{H}^{\xx}_0(M[T]) = \Lambda^{\xx}(M[T])$ and $\check{H}^{\xx}_0(M) = \Lambda^{\xx}(M)$.
			\item[(iv)]$\check{H}^{\xx}_i(M[T]) = 0$ for all $i > 0$ and 
			$\check{H}^{\xx}_0(M[T]) = \Lambda^{\xx}(M[T])$. 
		\end{itemize}
	\end{theorem}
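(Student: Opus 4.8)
The plan is to translate each of the four conditions into a statement purely about the inverse systems of Koszul homology modules and then to run the cycle (i) $\Rightarrow$ (ii) $\Rightarrow$ (iii) $\Rightarrow$ (iv) $\Rightarrow$ (i). The bridge between \v{C}ech homology and Koszul homology is Lemma \ref{apa-2}(b): for a fixed $R$-module $N$ the conjunction ``$\check{H}^{\xx}_i(N) = 0$ for all $i > 0$ and $\check{H}^{\xx}_0(N) \cong \Lambda^{\xx}(N)$'' is equivalent to the vanishing $\varprojlim H_i(\xx^{(n)};N) = \varprojlim^1 H_i(\xx^{(n)};N) = 0$ for all $i \geq 1$. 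Indeed, unravelling the first clause gives $\varprojlim^1 H_{i+1}(\xx^{(n)};N) = \varprojlim H_i(\xx^{(n)};N) = 0$ for $i \geq 1$, the second clause gives $\varprojlim^1 H_1(\xx^{(n)};N) = 0$, and $H_i(\xx^{(n)};N) = 0$ for $i > r$ makes this a finite set of conditions. Thus condition (ii) reads as this vanishing for $N = M^{(S)}$ and all $S$, while (iii) and (iv) are the analogous statements with $N$ ranging over $\{M,M[T]\}$, resp. $N = M[T]$.

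The second ingredient is that Koszul homology commutes with the two operations in play. Since $K_{\bullet}(\xx^{(n)};R)$ is a bounded complex of finitely generated free modules, tensoring it with the flat module $R[T]$, resp. forming arbitrary direct sums, commutes with homology; hence, writing $H_i^{(n)} := H_i(\xx^{(n)};M)$, there are isomorphisms of inverse systems
\[
\{H_i(\xx^{(n)};M[T])\}_{n \geq 0} \cong \{H_i^{(n)}[T]\}_{n \geq 0}, \qquad \{H_i(\xx^{(n)};M^{(S)})\}_{n \geq 0} \cong \{(H_i^{(n)})^{(S)}\}_{n \geq 0},
\]
compatible with the transition maps. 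Consequently condition (ii) becomes $\varprojlim (H_i^{(n)})^{(S)} = \varprojlim^1 (H_i^{(n)})^{(S)} = 0$ and condition (iv) becomes $\varprojlim H_i^{(n)}[T] = \varprojlim^1 H_i^{(n)}[T] = 0$, each for all $i \geq 1$.

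With these reformulations the first three implications are short. For (i) $\Rightarrow$ (ii): if $\{H_i^{(n)}\}_{n \geq 0}$ is pro-zero then so is $\{(H_i^{(n)})^{(S)}\}_{n \geq 0}$, since a direct sum of zero maps is zero, and a pro-zero inverse system has vanishing $\varprojlim$ and $\varprojlim^1$ by \ref{apn-1}(C); reversing the translation of the previous paragraph yields (ii). The implication (ii) $\Rightarrow$ (iii) is obtained by specializing $S$ to a one-element set, giving the clauses for $M$, and to $S = \mathbb{N}$, using $M[T] \cong M^{(\mathbb{N})}$, giving the clauses for $M[T]$. Finally (iii) $\Rightarrow$ (iv) simply forgets the clauses for $M$.

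The heart of the argument is (iv) $\Rightarrow$ (i), and this is precisely where passing through $M[T]$ is indispensable: by Example \ref{apn-2} the vanishing $\varprojlim = \varprojlim^1 = 0$ for $\{H_i^{(n)}\}$ alone would \emph{not} force pro-zero. Here condition (iv), after the translation above, gives exactly $\varprojlim H_i^{(n)}[T] = \varprojlim^1 H_i^{(n)}[T] = 0$ for each $i \geq 1$, which is condition (iv) of Corollary \ref{apn-3} for the inverse system $\{H_i^{(n)}\}_{n \geq 0}$. By the equivalence (iv) $\Leftrightarrow$ (i) of that corollary, $\{H_i^{(n)}\}_{n \geq 0}$ is then pro-zero for every $i \geq 1$, i.e. $\xx$ is $M$-weakly pro-regular. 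I expect the only genuinely delicate point to be the verification of the two inverse-system isomorphisms of the second paragraph together with their compatibility with the transition maps; once these are in place, everything else is a formal unwinding of \ref{apa-2}(b) and the pro-zero criterion \ref{apn-3}.
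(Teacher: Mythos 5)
Your proposal is correct and follows essentially the same route as the paper's own proof: the same cycle (i) $\Rightarrow$ (ii) $\Rightarrow$ (iii) $\Rightarrow$ (iv) $\Rightarrow$ (i), using Lemma \ref{apa-2} to translate the \v{C}ech-homology conditions into $\varprojlim$/$\varprojlim^1$-vanishing of Koszul homology, and Corollary \ref{apn-3} together with the isomorphism $H_i(\xx^{(n)};M[T]) \cong H_i(\xx^{(n)};M)[T]$ for the decisive implication (iv) $\Rightarrow$ (i). The compatibility of that isomorphism with the transition maps, which you rightly flag as the one delicate point, is left tacit in the paper and follows from the functoriality of flat base change along $R \to R[T]$.
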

	
	\begin{proof}
		(i) $\Longrightarrow$ (ii): It is clear that for  $i > 0$ the inverse system 
		$\{H_i(\xx^{(n)};M^{(S)})\}_{n \geq 0}$ is pro-zero too. Then 
		$\varprojlim H_i(\xx^{(n)};M^{(S)}) = \varprojlim{}^1 H_i(\xx^{(n)};M^{(S)}) = 0$ 
		for $i > 0$ and (ii) is a consequence of \ref{apa-2}.\\
		(ii) $\Longrightarrow$ (iii)  $\Longrightarrow$ (iv): These hold obviously. \\
		(iv) $\Longrightarrow$ (i): By view of \ref{apa-2} the assumptions imply that 
		\[
		\varprojlim H_i(\xx^{(n)};M[T])=  \varprojlim{}^1  H_i(\xx^{(n)};M[T])= 0 \; \mbox{ for }\;  i > 0.
		\]
		By \ref{apn-3} this completes the proof because  of $H_i(\xx^{(n)};M[T]) \cong 
		H_i(\xx^{(n)};M)[T]$. 
	\end{proof}
	
	In the following example we show that it is not sufficient to assume $S$ to be finite 
	in \ref{apa-4} for the characterization of weakly pro-regular sequences (see also \cite[Example 3.3]{Sp12}).
	
	\begin{example} \label{apa-5}
		Let $R = \Bbbk [|x|]$ denote the formal power series ring in the variable $x$ over the field 
		$\Bbbk$. Then define $A = \prod_{n \geq 1} R/x^nR$. By the component wise  operations $A$ becomes 
		a commutative ring. The natural map $R \to A, r \to (r+ x^nR)_{n \geq 1}$, is a ring homomorphism with 
		$x \mapsto {\bf x} := (x+x^n R)_{n \geq 1}$.  As a direct product of $xR$-complete modules 
		$A$ is an $xR$-complete $R$-module (see \cite[2.2.7]{SS}). 
		Since $R$ is a Noetherian ring $x$ is $R$-weakly pro-regular and 
		$\check{H}^x_i(A) \cong H_i(\Hom_R(\mathcal{L}_x,A))= 0$ for $i > 0$ and  
		$\check{H}^x_0(A) \cong H_0(\Hom_R(\mathcal{L}_x,A)) \cong A$. 
		Moreover, by the change of rings there is an isomorphism 
		$\Hom_R(\mathcal{L}_x, A) \cong \Hom_A(\mathcal{L}_{{\bf x}}, A)$. That is, 
		$\check{H}_i^{{\bf x}} (A) = 0$ for $i > 0$ and $\check{H}_0^{{\bf x}}(A)\cong A$.  
		Now note that  $A$ is not of bounded ${\bf x}$-torsion as easily seen. It follows that the equivalent 
		conditions in \ref{apa-4} do not hold for $A$ and $A[T]$.   
		To be more precise,  recall $H_1(x^n;A) = \prod_{i \geq 1} (x^{i-n}R/x^i R)$ with $x^{i-n} R = R$ 
		for $i \leq n$,  that is
		\[
		H_1(x^n;A) = (\underbrace{R/xR, \ldots, R/x^nR,}_{i \leq n}
		\underbrace{xR/x^{n+1}, \ldots,x^{i-n}R/x^iR, \ldots}_{i > n} ).
		\]
		Therefore $H_1(x^m;A)$ does not stabilize under the multiplication by $x^{m-n}$ in $H_1(x^n;A)$. 
		Note that the $i$-component of the image of $	H_1(x^m;A)$ under the multiplication by $x^{m-n}$ 
		in $H_1(x^n;A)$ is 
		zero for $i \leq m-n < m$ and non-zero for $i = m-n+1$. 
		Whence  
		$\{H_1(x^n;A)\}_{n \geq 1}$  does not satisfy the Mittag-Leffler condition. By view of \ref{apn-5} we have $\varprojlim{}^1 H_1(x^n;A[T]) \not= 0$ and $\Lambda_0^{x}(A[T]) \cong \check{H}^x_0(A[T]) 
		\twoheadrightarrow \Lambda^x(A[T])$ is not an isomorphism (see \ref{apa-2} (a)).
	\end{example}
	
	As an application we have another characterization that an $R$-module $M$ is of 
	bounded $x$-torsion for an element $x \in R$. Note that (iii) in \ref{apa-11} is the analogue to \ref{apa-4} (iv). 
	
	\begin{corollary} \label{apa-11}
		For an element $x \in R$ and an $R$-module $M$ the following conditions are equivalent:
		\begin{itemize}
			\item[(i)] $M$ is of bounded $x$-torsion.
			\item[(ii)] $\check{H}^x_1(M[T]) = \check{H}^x_1(M)= 0$ and $\check{H}^x_0(M[T]) \cong \Lambda^x(M[T])$ 
			and $\check{H}^x_0(M) \cong \Lambda^x(M)$.
			\item[(iii)] $\varprojlim 0 :_{M[T]} x^n=  \varprojlim^1 0 :_{M[T]} x^n = 0$. 
		\end{itemize}
	\end{corollary}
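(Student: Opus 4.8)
The plan is to obtain this corollary as the single-element specialization ($r=1$, $\xx = x$) of Theorem~\ref{apa-4}, supplemented by Corollary~\ref{apn-3} applied to the inverse system of torsion submodules. The starting point is the observation that for one element the Koszul homology degenerates: $H_1(x^n;M) = 0:_M x^n$ and $H_i(x^n;M) = 0$ for $i \geq 2$, so the only positive homological degree that ever occurs is $i = 1$. Unwinding Definition~\ref{apa-3}, the assertion that $x$ is $M$-weakly pro-regular therefore says exactly that $\{0:_M x^n\}_{n \geq 0}$, with transition maps given by multiplication by $x^{m-n}$, is pro-zero; that is, condition (i) is literally the statement that $x$ is $M$-weakly pro-regular.

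With this identification in hand, (i) $\Leftrightarrow$ (ii) is immediate from Theorem~\ref{apa-4} with $\xx = x$: its condition (i) becomes our (i), and its condition (iii) becomes our (ii), since for $r = 1$ the only relevant index among $i > 0$ is $i = 1$; the higher-degree \v{C}ech homology vanishes automatically because $H_i(x^n;M) = 0$ for $i \geq 2$, as seen from the short exact sequences of \ref{apa-2}(a). No computation is needed beyond this matching of notation.

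For (i) $\Leftrightarrow$ (iii) I would apply Corollary~\ref{apn-3} to the inverse system $M_n := 0:_M x^n$. The key point is that a polynomial $\sum_i a_i T^i$ is annihilated by $x^n$ if and only if each coefficient $a_i$ is, so that $0:_{M[T]} x^n = (0:_M x^n)[T] = M_n[T]$, and since the transition maps act coefficientwise this is an identification of inverse systems. Under it, condition (iii) is precisely condition (iv) of Corollary~\ref{apn-3}, namely $\varprojlim M_n[T] = \varprojlim^1 M_n[T] = 0$, which by that corollary is equivalent to $\{M_n\}$ being pro-zero, i.e. to (i) by Definition~\ref{apa-3}. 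The only delicate point, and it is bookkeeping rather than a genuine obstacle, is to verify that all three inverse-system structures line up --- that the Koszul transition maps on $\{H_1(x^n;M)\}$ coincide with multiplication by $x^{m-n}$ and that passing to $M[T]$ respects these maps --- so that the cited results apply to exactly the system $\{0:_M x^n\}$. Once this is checked, both equivalences follow with no further work.
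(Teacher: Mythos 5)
Your proposal is correct and follows essentially the same route as the paper, whose proof consists of exactly the two citations you use: the equivalence (i)\,$\Leftrightarrow$\,(ii) as the one-element case of Theorem~\ref{apa-4}, and (i)\,$\Leftrightarrow$\,(iii) as the special case of Corollary~\ref{apn-3} applied to the system $\{0:_M x^n\}_{n\geq 0}$. Your additional bookkeeping --- the identifications $H_1(x^n;M)=0:_Mx^n$ and $0:_{M[T]}x^n=(0:_Mx^n)[T]$ with matching transition maps --- is precisely what the paper leaves implicit, and it checks out.
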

	
	\begin{proof}
		The equivalence of the first two conditions is a particular case of \ref{apa-4}. The equivalence 
		of the first and third condition is a particular case of \ref{apn-3}.
	\end{proof}
	
	Moreover,   the proof of Theorem  \ref{in-1} follows by \ref{apa-4} and \cite[Proposition 5.3]{Sp11}. To this end  note that 
	$\check{H}_i^{\xx}(M) = H_i(\Hom_R(\mathcal{L}_{\xx},M))$. 
	For an $R$-module $X$ we call a complex $X_{\cdot}: \ldots \to  X_1 \to X_0 \to 0$ a left resolution whenever 
	$X_{\cdot} \qism M$. A co-complex $Y^{\cdot}: 0 \to Y^0 \to Y^1 \to \ldots$ is called a right resolution of $X$ 
	provided $X \qism Y^{\cdot}$.
	
	With the previous results we have the following slight generalization of Potsitselski's result (see \cite[Theorem 3.6]{Pl}). Note that  $\xx$ is $R$-weakly pro-regular if it is $R[T]$-weakly pro-regular as easily seen. 
	
	\begin{corollary} \label{in-4}
		For a sequence $\xx = x_1,\ldots,x_r$ of a ring $R$ the following conditions are equivalent:
		\begin{itemize}
			\item[(i)] $\xx$ is $R$-weakly pro-regular.	
			\item[(ii)]  $\Hom_R(\mathcal{L}_{\xx}, M)$ is a left resolution of $\Lambda^{\xx}(M)$ for any free $R$-module $M$.
			\item[(iii)]   $\Hom_R(\mathcal{L}_{\xx}, R[T])$ is a left resolution of $\Lambda^{\xx}(R[T])$.
		\end{itemize}
	\end{corollary}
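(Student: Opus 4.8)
The plan is to read all three conditions as the special case $M = R$ of the already established Theorem \ref{apa-4}, once the notion of a left resolution is translated into vanishing statements for \v{C}ech homology. The only preliminary I need is the dictionary between the two languages. Recall from the comment preceding the statement that $\check{H}_i^{\xx}(X) = H_i(\Hom_R(\mathcal{L}_{\xx},X))$, and recall from \ref{apa-1}(B) that $\Hom_R(\mathcal{L}_{\xx},X) \cong K_{\bullet}(\xx-\UU;X[|\UU|])$ is a complex concentrated in homological degrees $0,\ldots,r$. Hence $\check{H}_i^{\xx}(X)$ vanishes automatically for $i<0$, and $\Hom_R(\mathcal{L}_{\xx},X)$ is a left resolution of $\Lambda^{\xx}(X)$ \emph{precisely} when $\check{H}_i^{\xx}(X)=0$ for all $i>0$ and the natural map gives an isomorphism $\check{H}_0^{\xx}(X) \cong \Lambda^{\xx}(X)$.

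First I would handle (ii). Every free $R$-module is of the form $R^{(S)}$ for a set $S$, so by the dictionary condition (ii) says exactly that $\check{H}_i^{\xx}(R^{(S)})=0$ for $i>0$ and $\check{H}_0^{\xx}(R^{(S)})\cong \Lambda^{\xx}(R^{(S)})$ for every set $S$. This is verbatim condition (ii) of \ref{apa-4} taken with $M=R$, whose equivalence there with (i) is precisely the equivalence (i) $\Leftrightarrow$ (ii) that we want.

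Next I would handle (iii) in the same manner. Unwinding, the assertion that $\Hom_R(\mathcal{L}_{\xx},R[T])$ is a left resolution of $\Lambda^{\xx}(R[T])$ becomes $\check{H}_i^{\xx}(R[T])=0$ for $i>0$ together with $\check{H}_0^{\xx}(R[T])\cong \Lambda^{\xx}(R[T])$, which is word for word condition (iv) of \ref{apa-4} with $M=R$. The equivalence (i) $\Leftrightarrow$ (iv) of \ref{apa-4} then yields (i) $\Leftrightarrow$ (iii), closing the cycle.

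I expect no genuine obstacle, since all the analytic content is already carried by \ref{apa-4} and the corollary is a repackaging of its $M=R$ case. The single delicate point is that the whole family of free modules in (ii) can be replaced by the single module $R[T]$ in (iii): this is exactly the nontrivial implication (iv) $\Rightarrow$ (i) of \ref{apa-4}, which itself rests on \ref{apn-3} and on the identification $H_i(\xx^{(n)};R[T])\cong H_i(\xx^{(n)};R)[T]$. I would cite \ref{apa-4} for it rather than reprove it, noting as a sanity check that $R[T]\cong R^{(\mathbb{N})}$ is free, so that the passage (ii) $\Rightarrow$ (iii) is trivial and the real content lies in the reverse direction.
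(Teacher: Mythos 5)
Your proposal is correct and takes essentially the same approach the paper intends: the corollary appears without a separate proof precisely because, after translating ``left resolution'' into the vanishing of $\check{H}_i^{\xx}$ for $i>0$ together with the natural map inducing $\check{H}_0^{\xx}\cong\Lambda^{\xx}$ (using $\check{H}_i^{\xx}(X)=H_i(\Hom_R(\mathcal{L}_{\xx},X))$), your conditions (ii) and (iii) are verbatim conditions (ii) and (iv) of Theorem \ref{apa-4} specialized to $M=R$. Your observation that the only nontrivial content is the implication (iv) $\Rightarrow$ (i) of \ref{apa-4}, resting on \ref{apn-3} and $H_i(\xx^{(n)};R[T])\cong H_i(\xx^{(n)};R)[T]$, matches the paper's logic exactly.
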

	
\begin{remark} \label{com}
	While the property of $R$-regular and $M$-regular sequences are quite "symmetric" this is not 
	the case for the notion of weakly pro-regularity. Let $\xx$ denote a sequence of elements of $R$. 
		If it is $R$-weakly pro-regular it follows that $\check{H}^{\xx}_0(M) \cong \Lambda_0^{\xx}(M)$ for 
		any $R$-module $M$  (see e.g. \cite[Chapter 7]{SS}). Let $\xx$ be $M$-weakly pro-regular, then $\check{H}^{\xx}_0(M) \cong \Lambda^{\xx}(M)$ as shown in \ref{apa-4}. Note that the homomorphism 
		$\Lambda_0^{\xx}(M) \to  \Lambda^{\xx}(M)$  is onto (see \cite[2.5.1]{SS}) but in general not an isomorphism 
		(see e.g. Example \ref{apa-5}).
\end{remark}

	\section{Pro-regular sequences}
	
	Before we shall investigate pro-regular sequences we need  technical results about 
	pro-zero inverse systems. To this end let $M$ denote an $R$-module with 
	$\{M_n\}_{n \geq 1}$ a decreasing sequence of submodules of $M$, i.e. $M_{n+1} \subseteq M_n$ 
	for $n \geq 1$. Then $\mathcal{M} = \{M/M_n\}_{n \geq 1}$ forms an inverse system with surjective 
	maps $M/M_{n+1} \to M/M_n$. Moreover, let $\Lambda(\mathcal{M})= \varprojlim M/M_n$. 
	For a sequence of elements 
	$\xx = x_1,\ldots, x_r \in R$ we consider the induced filtration $\{(\xx^{(n)}M,M_n)\}_{n \geq 1}$, where 
	$\xx^{(n)} = x_1^n,\ldots,x_r^n$. We write $\Lambda(\mathcal{M}/\xx \mathcal{M})
	:= \varprojlim M/(\xx^{(n)}M,M_n)$ for the inverse limit of the induced filtration. Then there is a natural homomorphism 
	$\Lambda^{\xx}(\Lambda(\mathcal{M})) \to \Lambda(\mathcal{M}/\xx \mathcal{M})$. In the following we will discuss when it is an isomorphism.
	
	\begin{lemma} \label{gm+}
		With the previous notation there is a short exact sequence 
		\[
		0 \to \varprojlim{}_n \varprojlim{}^1_m H_1(\xx^{(n)};M/M_m) 
		\to \Lambda^{\xx}(\Lambda(\mathcal{M})) \to \Lambda(\mathcal{M}/\xx \mathcal{M}) \to 0.
		\]
		Therefore $ \Lambda^{\xx}(\Lambda(\mathcal{M})) \cong  \Lambda(\mathcal{M}/\xx \mathcal{M})$ 
		if and only if $\varprojlim_n \varprojlim{}^1_m H_1(\xx^{(n)};M/M_m) = 0$.
	\end{lemma}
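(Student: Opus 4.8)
The plan is to set $\widehat M = \Lambda(\mathcal M) = \varprojlim_m M/M_m$ and to compare the two completions through the double inverse system indexed by $(n,m)$ coming from the Koszul complexes $K_\bullet(\xx^{(n)};M/M_m)$. First I would fix $n$ and pass to the limit over $m$. Since the transition maps $M/M_{m+1}\to M/M_m$ are surjective, each term of the tower $\{K_\bullet(\xx^{(n)};M/M_m)\}_m$ has surjective transitions, so termwise $\varprojlim{}^1_m$ vanishes; moreover $\varprojlim_m K_\bullet(\xx^{(n)};M/M_m) = K_\bullet(\xx^{(n)};\widehat M)$ because the Koszul terms are finite direct sums. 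The Milnor sequence for the homology of an inverse limit of such a tower then yields, in degree $0$, a short exact sequence
\[
0 \to \varprojlim{}^1_m H_1(\xx^{(n)};M/M_m) \to \widehat M/\xx^{(n)}\widehat M \to \varprojlim{}_m M/(\xx^{(n)}M + M_m) \to 0,
\]
using $H_0(\xx^{(n)};M/M_m) = M/(\xx^{(n)}M+M_m)$ and $H_0(\xx^{(n)};\widehat M) = \widehat M/\xx^{(n)}\widehat M$. Writing $A_n,B_n,C_n$ for the three terms, this is a short exact sequence of inverse systems in $n$.

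Next I would pass to the limit over $n$. The tower $\{B_n\}_n = \{\widehat M/\xx^{(n)}\widehat M\}_n$ has surjective transition maps (as $\xx^{(n+1)}\widehat M \subseteq \xx^{(n)}\widehat M$), so $\varprojlim_n B_n = \Lambda^{\xx}(\widehat M) = \Lambda^{\xx}(\Lambda(\mathcal M))$ and $\varprojlim{}^1_n B_n = 0$. For the right-hand terms, the diagonal $\{(n,n)\}$ is cofinal in $\mathbb{N}\times\mathbb{N}$, and the diagonal tower $\{M/(\xx^{(n)}M+M_n)\}_n$ again has surjective transitions; hence $\varprojlim_n C_n = \varprojlim_n\varprojlim_m M/(\xx^{(n)}M+M_m) = \varprojlim_n M/(\xx^{(n)}M+M_n) = \Lambda(\mathcal M/\xx\mathcal M)$. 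Feeding $0\to A_n\to B_n\to C_n\to 0$ into the six-term exact sequence of inverse limits (see \ref{apn-1}) and using $\varprojlim{}^1_n B_n = 0$, I obtain a four-term exact sequence
\[
0 \to \varprojlim{}_n A_n \to \Lambda^{\xx}(\Lambda(\mathcal M)) \to \Lambda(\mathcal M/\xx\mathcal M) \to \varprojlim{}^1_n A_n \to 0,
\]
whose kernel $\varprojlim_n A_n = \varprojlim_n\varprojlim{}^1_m H_1(\xx^{(n)};M/M_m)$ is exactly the asserted one, and whose cokernel term is $\varprojlim{}^1_n A_n = \varprojlim{}^1_n\varprojlim{}^1_m H_1(\xx^{(n)};M/M_m)$.

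The main obstacle, and the only point separating this four-term sequence from the asserted short exact sequence, is the surjectivity of the natural map, that is, the vanishing of $\varprojlim{}^1_n\varprojlim{}^1_m H_1(\xx^{(n)};M/M_m)$. I would establish this by a general property of derived inverse limits over $\mathbb{N}\times\mathbb{N}$ applied to the double system $H_{n,m} = H_1(\xx^{(n)};M/M_m)$. The iterated-limit spectral sequence $E_2^{p,q} = \varprojlim{}^p_n\varprojlim{}^q_m H_{n,m} \Rightarrow \varprojlim{}^{p+q}_{(n,m)} H_{n,m}$ is concentrated in the square $p,q\in\{0,1\}$ (higher derived limits over a single copy of $\mathbb{N}$ vanish), so it degenerates and identifies $\varprojlim{}^1_n\varprojlim{}^1_m H_{n,m}$ with $\varprojlim{}^2_{(n,m)} H_{n,m}$. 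Since the diagonal chain $\{(n,n)\}\cong\mathbb{N}$ is cofinal in $\mathbb{N}\times\mathbb{N}$, cofinality for derived limits gives $\varprojlim{}^i_{(n,m)} H_{n,m} \cong \varprojlim{}^i_n H_{n,n}$ for all $i$, and the right-hand side vanishes for $i\geq 2$ as a derived limit of a tower (cf. \cite{Jcu}, \cite{Wc}). Hence $\varprojlim{}^1_n\varprojlim{}^1_m H_{n,m} = 0$, the natural map is onto, and the four-term sequence collapses to the desired short exact sequence. The final equivalence is then immediate, since the middle map is an isomorphism precisely when its kernel $\varprojlim_n\varprojlim{}^1_m H_1(\xx^{(n)};M/M_m)$ vanishes.
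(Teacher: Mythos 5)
Your proof is correct and follows essentially the same route as the paper: take the inverse limit over $m$ of the degreewise surjective towers of Koszul complexes to get the Milnor sequence $0 \to \varprojlim{}^1_m H_1(\xx^{(n)};M/M_m) \to H_0(\xx^{(n)};\Lambda(\mathcal{M})) \to \varprojlim{}_m H_0(\xx^{(n)};M/M_m) \to 0$, then pass to the limit over $n$, where the paper disposes of the obstruction $\varprojlim{}^1_n \varprojlim{}^1_m H_1(\xx^{(n)};M/M_m)$ by exactly your mechanism --- Roos' spectral sequence plus the fact that the bi-countably indexed system has a cofinal diagonal chain, so $\varprojlim^i$ vanishes for $i \geq 2$. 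You merely spell out in full (the six-term sequence, the degeneration at $E_2^{1,1}$, cofinality for derived limits) what the paper compresses into its citation of \cite{Rje}.
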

	
	\begin{proof}
		Let $m,n$ denote positive integers. We investigate the inverse system of Koszul complexes 
		$\{K_{\bullet}(\xx^{(n)};M/M_m)\}_{m \geq 1}$. For its inverse limit there are isomorphisms 
		\[
		\varprojlim{}_m K_{\bullet}(\xx^{(n)},M/ M_m) 
		\cong \Hom_R(K^{\bullet}(\xx^{(n)}),\Lambda(\mathcal{M})) \cong K_{\bullet}(\xx^{(n)};\Lambda(\mathcal{M})).
		\]
		The inverse system $\{K_{\bullet}(\xx^{(n)};M/M_m)\}_{m \geq 1}$ is degree-wise surjective. 
		Whence for its $0$-th homology there is a short exact sequence
		\[
		0 \to \varprojlim{}^1_m H_1(\xx^{(n)};M/M_m) \to H_0(\xx^{(n)};\Lambda(\mathcal{M})) 
		\to \varprojlim{}_m H_0(\xx^{(n)};M/M_m) \to 0
		\]
		(see \cite[1.2.8]{SS}). It forms an exact sequence of inverse systems on $n$. 
		By passing to the inverse limit it 
		provides the short exact sequence of the statement 
		since $\varprojlim{}^1_n \varprojlim{}^1_m H_1(\xx^{(n)};M/ M_m) =0$ because of the underlying 
		bi-countable indexed system (see  the spectral 
		sequence in \cite{Rje}). Whence the statement follows.
	\end{proof}
	
	The previous result is an extension of  \cite[Lemma 1.6]{GM} to the case of a sequence of elements 
	and a more general filtration. Namely, 
	it was shown by Greenlees and May  that the vanishing of 
	$\varprojlim{}_n \varprojlim{}^1_m H_1(x^n;M/\mathcal{I}^m M)$ implies the isomorphism  
	$\Lambda^x(\Lambda^{\mathcal{I}}(M)) \cong \Lambda^{(x,\mathcal{I})}(M)$. By \ref{gm+} 
	the vanishing is also necessary for the isomorphism.

	For any set $S$ we define also $\Lambda(\mathcal{M}^{(S)}) = \varprojlim M^{(S)}/M_n^{(S)} \cong 
	\varprojlim ((M/M_n)^{(S)})$. 
	For an element $x \in R$ we put - as before - 
	$$
	\Lambda((\mathcal{M}/x\mathcal{M})^{(S)})= \varprojlim M^{(S)}/(xM,M_n)^{(S)} \cong 
	\varprojlim ((M/(xM,M_n)^{(S)})).
	$$ 
	Moreover, we study when the inverse system $\{M_n:_M x^n/M_n\}_{n \geq 1}$ with the multiplication by $x$ is pro-zero. That is, when for each $n \geq 1$ there is an $m \geq n$ such that the multiplication map 
	$$
	M_m:_M x^m/M_m  \stackrel{x^{m-n}}{\longrightarrow} M_n :_M x^n/M_n
	$$ 
	is zero. This is equivalent to the 
	inverse system $\{H_1(x^n; M/M_n)\}_{n \geq 1}$ being pro-zero, where  $H_1(x^n; M/M_n)$ 
	denotes the Koszul homology of $M/M_n$ with respect to the element $x^n$. In other words, 
	for each integer $n \geq 1$ there is an $m \geq n$ such that $M_m:_M x^m 
	\subseteq M_n:_M x^{m-n}$. Note that, if $M_n =:N$ for all $n \geq 1$, then 
	$\{H_1(x^n; M/N)\}_{n \geq 1}$ is pro-zero if and only if $M/N$ is of bounded $x$-torsion. With this in mind 
	we shall continue with an extension of \ref{apa-11}.
	
	\begin{theorem} \label{apa-0}
		With the previous notation the following conditions are equivalent:
		\begin{itemize}
			\item[(i)] The inverse system $\{H_1(x^n;M/M_n)\}_{n \geq 1}$ is pro-zero.
			\item[(ii)] $\check{H}^x_1(\Lambda(\mathcal{M}^{(S)})) = 0$ and $\check{H}^x_0(\Lambda(\mathcal{M}^{(S)})) \cong 
			\Lambda((\mathcal{M}/x\mathcal{M})^{(S)})$ for any set $S$.
			\item[(iii)] Condition (ii) holds for $S$ a set of a single element and $S = \mathbb{N}$. 
			\item[(iv)] $\varprojlim H_1(x^n;Y_n) = \varprojlim^1 H_1(x^n;Y_n) = 0$ for both $Y_n = M/M_n$ and  $Y_n= M/M_n[T]$.
			\item[(v)] $\varprojlim H_1(x^n;M/M_n[T] )= \varprojlim^1 H_1(x^n;M/M_n[T]) = 0$. 
		\end{itemize}
	\end{theorem}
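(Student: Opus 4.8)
The plan is to close the loop (i) $\Rightarrow$ (ii) $\Rightarrow$ (iii) $\Rightarrow$ (v) and to prove separately that (iv), (v) and (i) are equivalent, so that all five conditions coincide. The equivalences among (i), (iv) and (v) I would extract directly from \ref{apn-3} applied to the single inverse system $\{H_1(x^n;M/M_n)\}_{n\geq1}$: since the polynomial functor is free, there are transition-compatible isomorphisms $H_1(x^n;M/M_n[T]) \cong H_1(x^n;M/M_n)[T]$, so that the system in the variable $Y_n=M/M_n[T]$ is exactly the $[T]$-system of $\{H_1(x^n;M/M_n)\}$. Under this identification condition (iv) becomes \ref{apn-3}(iii) and condition (v) becomes \ref{apn-3}(iv), while (i) is \ref{apn-3}(i); hence (i) $\Leftrightarrow$ (iv) $\Leftrightarrow$ (v). The implication (ii) $\Rightarrow$ (iii) being trivial, the substance of the theorem lies in tying the \v{C}ech conditions (ii)/(iii) to the diagonal system, and I would package this as a dictionary valid for an arbitrary set $S$.

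The first half of the dictionary treats $\check{H}^x_1$. As $x$ is a single element, $H_2(x^k;-)=0$, so \ref{apa-2} gives $\check{H}^x_1(\Lambda(\mathcal{M}^{(S)})) \cong \varprojlim_k H_1(x^k;\Lambda(\mathcal{M}^{(S)}))$. Writing $H_1(x^k;-)$ as the kernel $(0:_{-}x^k)$ and using left exactness of the inverse limit over $m$, I obtain $H_1(x^k;\Lambda(\mathcal{M}^{(S)})) \cong \varprojlim_m H_1(x^k;(M/M_m)^{(S)})$, so that $\check{H}^x_1(\Lambda(\mathcal{M}^{(S)}))$ is the double limit $\varprojlim_k\varprojlim_m H_1(x^k;(M/M_m)^{(S)})$. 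Since the diagonal is cofinal in $\mathbb{N}\times\mathbb{N}$, this equals $\varprojlim_n H_1(x^n;(M/M_n)^{(S)})$; thus $\check{H}^x_1(\Lambda(\mathcal{M}^{(S)}))=0$ if and only if $\varprojlim_n H_1(x^n;(M/M_n)^{(S)})=0$.

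The second half treats $\check{H}^x_0$ and is the technical heart. I would splice the short exact sequence of \ref{apa-2}, which presents $\check{H}^x_0(\Lambda(\mathcal{M}^{(S)}))$ as an extension of $\varprojlim_k H_0(x^k;\Lambda(\mathcal{M}^{(S)}))=\Lambda^x(\Lambda(\mathcal{M}^{(S)}))$ by $\varprojlim^1_k H_1(x^k;\Lambda(\mathcal{M}^{(S)}))$, together with the sequence of \ref{gm+} applied to $M^{(S)}$ with the filtration $\{M_n^{(S)}\}$, which presents $\Lambda^x(\Lambda(\mathcal{M}^{(S)}))$ as an extension of $\Lambda((\mathcal{M}/x\mathcal{M})^{(S)})$ by $\varprojlim_n\varprojlim^1_m H_1(x^n;(M/M_m)^{(S)})$. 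The canonical map $\check{H}^x_0(\Lambda(\mathcal{M}^{(S)})) \to \Lambda((\mathcal{M}/x\mathcal{M})^{(S)})$ is then an isomorphism precisely when both connecting kernels vanish. To recognize these two kernels simultaneously I would invoke the spectral sequence of the bicountable system $\{H_1(x^k;(M/M_m)^{(S)})\}_{k,m}$ relating the iterated and the total derived limits, as already used in \ref{gm+} (see \cite{Rje}); because derived limits over $\mathbb{N}$ vanish above degree one, it collapses to the exact sequence $0 \to \varprojlim^1_k\varprojlim_m \to \varprojlim^1_{(k,m)} \to \varprojlim_k\varprojlim^1_m \to 0$ for this system. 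Hence both kernels vanish if and only if $\varprojlim^1_{(k,m)} H_1(x^k;(M/M_m)^{(S)})=0$, which by cofinality of the diagonal equals $\varprojlim^1_n H_1(x^n;(M/M_n)^{(S)})$. Assembling the two halves yields the dictionary: for each set $S$, condition (ii) holds for $S$ if and only if $\varprojlim_n H_1(x^n;(M/M_n)^{(S)}) = \varprojlim^1_n H_1(x^n;(M/M_n)^{(S)}) = 0$.

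With the dictionary the remaining implications fall out. For (i) $\Rightarrow$ (ii): pro-zeroness of $\{H_1(x^n;M/M_n)\}$ passes to $\{H_1(x^n;(M/M_n)^{(S)})\}=\{H_1(x^n;M/M_n)^{(S)}\}$, since a vanishing transition map stays zero after forming $(-)^{(S)}$, so both diagonal limits vanish and (ii) holds for every $S$. For (iii) $\Rightarrow$ (v) I would use only the case $S=\mathbb{N}$: via $(M/M_n)^{(\mathbb{N})}=M/M_n[T]$ and $H_1(x^n;M/M_n)^{(\mathbb{N})}=H_1(x^n;M/M_n)[T]$, the dictionary converts (iii) into $\varprojlim_n H_1(x^n;M/M_n)[T]=\varprojlim^1_n H_1(x^n;M/M_n)[T]=0$, which is (v). The main obstacle is the bookkeeping in the third paragraph: justifying that restriction to the diagonal preserves both $\varprojlim$ and $\varprojlim^1$ and that the bicountable spectral sequence degenerates into the displayed three-term sequence — precisely the machinery already deployed in the proof of \ref{gm+}.
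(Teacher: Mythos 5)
Your proposal is correct and uses essentially the same approach as the paper: the identification $H_1(x^k;\Lambda(\mathcal{M}^{(S)}))\cong\varprojlim_m H_1(x^k;(M/M_m)^{(S)})$, the cofinality of the diagonal in the bi-indexed system, Roos' three-term exact sequence, the surjective factorization $\check{H}^x_0(\Lambda(\mathcal{M}^{(S)}))\stackrel{\beta}{\to}\Lambda^x(\Lambda(\mathcal{M}^{(S)}))\stackrel{\gamma}{\to}\Lambda((\mathcal{M}/x\mathcal{M})^{(S)})$ coming from \ref{apa-2} and \ref{gm+}, and \ref{apn-3} applied to the Koszul systems are exactly the ingredients of the paper's argument. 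Your only deviation is organizational -- you obtain (i) $\Leftrightarrow$ (iv) $\Leftrightarrow$ (v) in one stroke from \ref{apn-3} and package the \v{C}ech conditions as a two-way dictionary, whereas the paper runs the cycle (i) $\Rightarrow$ (ii) $\Rightarrow$ (iii) $\Rightarrow$ (iv) $\Rightarrow$ (v) $\Rightarrow$ (i) -- which does not change the substance.
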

	
	\begin{proof}
		(i) $ \Longrightarrow$ (ii): We put $X = M^{(S)}$ and $X_n = (M_n)^{(S)}$. Then it follows that  
		$\{H_1(x^n;X/X_n)\}_{n \geq 1}$ is pro-zero too since the Koszul homology commutes 
		with direct sums, therefore 
		\[
		\varprojlim H_1(x^n;X/X_n) = \varprojlim{}^1 H_1(x^n;X/X_n) = 0.
		\] 
		Furthermore there are isomorphisms 
		$$
		\varprojlim_m H_1(x^n; X/X_m) \cong 
		\varprojlim_m \Hom_R(R/x^nR,X/X_m) \cong 
		H_1(x^n; \Lambda(X)) 
		$$ 
		for all  $n \geq 1$.  
		We have the bi-indexed system 
		$\{H_1(x^n;X/X_m)\}_{n \geq 1,m \geq 1}$  and the diagonal system $\{H_1(x^n;X/X_n)\}_{n \geq 1}$ 
		cofinal in it. 
		There are the isomorphisms and the vanishing 
		\[
		\varprojlim_n H_1(x^n;\Lambda(X))  \cong 
		\varprojlim{}_n \varprojlim{}_m H_1(x^n;X/X_m) \cong \varprojlim_{n,m} H_1(x^n;X/X_m) = 0. 
		\]
		By virtue of Roos' spectral sequence  (see \cite{Rje} or \cite[5.8.7]{Wc}) there is a short exact 
		sequence 
		\[
		0 \to \varprojlim{}^1_n \varprojlim{}_m H_1(x^n;X/X_m) \to \varprojlim{}^1_{n,m}
		H_1(x^n;X/X_m) \to  \varprojlim{}_n \varprojlim{}^1_m 
		H_1(x^n;X/X_m) \to 0 \tag{$\#$}
		\] 
		and a similar one with $m,n$ reversed. 
		This implies  
		the vanishing $\varprojlim{}^1 H_1(x^n;\Lambda(X)) = 0$ and also  $\varprojlim{}_n \varprojlim{}^1_m H_1(x^n;X/X_m) = 0$. 
		By view of \ref{apa-2} and \ref{gm+} this proves the claim. \\ 
		(ii) $\Longrightarrow$ (iii): This holds trivially. \\
		(iii) $\Longrightarrow$ (iv): 
		By 	\ref{apa-2} the assumption implies that 
		$ \varprojlim  H_1(x^n; \Lambda(\mathcal{X})) = \varprojlim^1   H_1(x^n;\Lambda(\mathcal{X})) =0$ 
		for $\mathcal{X} =\mathcal{M} $ and $\mathcal{M}[T]$.  Put $X/X_m = \mathcal{M}_m$. 
		Because 
		$\Lambda(\mathcal{X}) \cong \varprojlim_m X/X_m$ and since the inverse limit  commutes 
		(as above)  with the first Koszul homology it follows that 
		\[
		\varprojlim_n  \varprojlim_m H_1(x^n; X/X_m ) = 
		\varprojlim_n{}^1  \varprojlim_m H_1(x^n; X/X_m) = 0 \tag{$\star$}.
		\]
		The first vanishing implies that 
		$\varprojlim H_1(x^n; M/M_n) = 0$.  
		In order to continue note that the isomorphism of the assumption $\check{H}^x_0(\Lambda(\mathcal{X})) 
		\cong \Lambda(\mathcal{X}/x\mathcal{X})$
		factors through 
		\[
		\check{H}^x_0(\Lambda(\mathcal{X})) \stackrel{\beta}{\longrightarrow}
		\Lambda^x(\Lambda(\mathcal{X}))  \stackrel{\gamma}{\longrightarrow}
		\Lambda(\mathcal{X}/x\mathcal{X})
		\]
		surjections $\beta$ (see \ref{apa-2}) and $\gamma$ (see \ref{gm+}).
		Whence  $ \Lambda^x(\Lambda(\mathcal{X}))  \to \Lambda(\mathcal{X}/x\mathcal{X}) $
		is an isomorphism 
		and $\varprojlim_n \varprojlim{}^1_m H_1(\xx^{(n)};M/M_m) = 0$ (see \ref{gm+}). 
		Therefore 
		\[
		\varprojlim{}_n^1\varprojlim{}_m H_1(x^n;X/X_m) = \varprojlim{}_n 
		\varprojlim{}_m^1 H_1(x^n;X/X_m) = 0.
		\]
		By Roos' exact sequence above (see $(\#)$) $\varprojlim^1  H_1(x^n; M/M_n) = 0$, as required.\\
		(iv) $\Longrightarrow$ (v): This is obvious.\\
		(v) $\Longrightarrow$ (i): 
		The Koszul homology commutes with direct sums. Therefore 
		the implication follows by virtue of \ref{apn-3}.
	\end{proof}
	
	The implication (i) $\Longrightarrow$ (ii) in \ref{apa-0} is a generalization of 
	\cite[Proposition 1.7]{GM}. 
	Furthermore, a certain generalization of bounded torsion to the study of sequences  was invented by Greenlees and May (see \cite{GM}) and Lipman et al. (see \cite{ALL1}), namely:
	
	\begin{definition} \label{apa-6}
		(A) Let $\xx = x_1,\ldots,x_r$ denote a sequence of elments of $R$. For an $R$-module $M$ it is 
		called $M$-pro-regular if the inverse systems with the multiplication map by $x_i^n$
		\[
		\{(x_1^n,\ldots,x_{i-1}^n)M :_M x_i^n/(x_1^n,\ldots,x_{i-1}^n)M \}_{n \geq 1}, \quad i = 1,\ldots,r,
		\]
		are pro-zero. This  is equivalent to saying that the inverse systems 
		$\{H_1(x_i^{(n)};M/\xx_{i-1}^{(n)}M)\}_{n \geq 1}$ are pro-zero for $i = 1, \ldots, r$. 
		For a sequence of elements $\xx = x_1,\ldots,x_r$ we specify the 
		subsystems $\xx_i = x_1,\ldots,x_i$ for $i = 0,\ldots, r-1$.\\
		(B)  The notion of pro-zero is equivalent to say that for $i = 1,\ldots,r$ and 
		any positive integer $n$ there is an integer 
		$m \geq n$ such that 
		\[
		(x_1^m,\ldots,x_{i-1}^m)M :_M x_i^m \subseteq 	(x_1^n,\ldots,x_{i-1}^n)M  :_M x_i^{m-n}.  
		\]
		Note that an element $x\in R$ is $M$-pro-regular if and only if $M$ is of bounded $x$-torsion.
	\end{definition}
	
	For a discussion of the notions of pro-regularity of Greenlees and May (see \cite{GM}) resp. Lipman (see \cite{ALL1}) 
	we refer to \cite{Sp12}. Moreover, it follows that an $M$-pro-regular sequence  is also 
	$M$-weakly pro-regular (see e.g. \cite[Theorem 2.4]{Sp12}), while the converse does not hold 
	(see \cite{Lj1}).  For a homological characterization of $M$-pro-regular sequences in terms 
	of injective modules we refer to \cite[Theorem 2.1]{Sp12}. Here we add a slight extension of 
	\cite[Theorem 2.1]{Sp12}.  
	
	\begin{theorem} \label{apa-7}
		Let $\xx = x_1,\ldots,x_r$ denote an ordered sequence of elements of $R$. Let $M$ denote an $R$-module. 
		Then the following conditions are equivalent.
		\begin{itemize}
			\item[(i)] The sequence $\xx$ is $M$-pro-regular. 
			\item[(ii)] The sequence $\xx$ is $(M\otimes_RF)$-pro-regular for any flat $R$-module $F$.
			\item[(iii)] $\check{H}^1_{x_i}(\Gamma_{\xx_{i-1}}(\Hom_R(M,I) ) = 0$ for $i = 1,\ldots,k$ and any injective $R$-module $I$. 
			\item[(iv)] $\check{H}^1_{\xx_i}(\Hom_R(M,I) ) = 0$ for $i = 1,\ldots,k$ and any injective $R$-module $I$. 
		\end{itemize}
	\end{theorem}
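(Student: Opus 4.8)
The plan is to reduce every condition to a pro-zero statement about the defining inverse systems $\{H_1(x_i^n;M/\xx_{i-1}^{(n)}M)\}_{n\geq 1}$ of $M$-pro-regularity, passing between the torsion-theoretic and the homological pictures by the comparison $\check{H}^i_{\xx}(\cdot)\cong\varinjlim_n H^i(\xx^{(n)};\cdot)$ of \ref{apa-2} and by a Matlis-type duality relating pro-zeroness of an inverse system $\{A_n\}$ to the vanishing of $\varinjlim_n\Hom_R(A_n,I)$ for all injective $I$. The equivalence (i)$\Longleftrightarrow$(ii) is the easy new ingredient: for flat $F$ one has $(M\otimes_RF)/\xx_{i-1}^{(n)}(M\otimes_RF)\cong(M/\xx_{i-1}^{(n)}M)\otimes_RF$ and $H_1(x_i^n;N\otimes_RF)\cong H_1(x_i^n;N)\otimes_RF$, so tensoring a zero transition map by $F$ keeps it zero; hence $M$-pro-regularity ascends to $M\otimes_RF$, and the converse is the case $F=R$.

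For the core equivalence (i)$\Longleftrightarrow$(iii) I set $P=\Hom_R(M,I)$ with $I$ injective and first note $\Gamma_{\xx_{i-1}}(P)=\bigcup_n(0:_P\xx_{i-1}^{(n)})=\varinjlim_n\Hom_R(M/\xx_{i-1}^{(n)}M,I)$. Since \v{C}ech cohomology commutes with filtered colimits and $\Hom_R(\cdot,I)$ is exact, the Koszul duality $\Hom_R(H_1(x_i^m;N),I)\cong H^1(x_i^m;\Hom_R(N,I))$ gives, after interchanging the two direct limits and restricting to the cofinal diagonal,
\[
\check{H}^1_{x_i}(\Gamma_{\xx_{i-1}}(\Hom_R(M,I)))\cong\varinjlim_n\Hom_R\bigl(H_1(x_i^n;M/\xx_{i-1}^{(n)}M),I\bigr).
\]
By the duality above, pro-zeroness of $\{H_1(x_i^n;M/\xx_{i-1}^{(n)}M)\}_n$ is equivalent to the vanishing of this colimit for every injective $I$; running over $i=1,\dots,r$ yields (i)$\Longleftrightarrow$(iii).

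The equivalence (iii)$\Longleftrightarrow$(iv) is formal. From $\check{C}_{\xx_i}=\check{C}_{\xx_{i-1}}\otimes_R\check{C}_{x_i}$ one gets the short exact sequence of complexes $0\to\check{C}_{\xx_{i-1}}(P)_{x_i}[-1]\to\check{C}_{\xx_i}(P)\to\check{C}_{\xx_{i-1}}(P)\to 0$, whose cohomology sequence, together with $\check{H}^0_{\xx_{i-1}}(P)=\Gamma_{\xx_{i-1}}(P)$ and $\Coker(\Gamma_{\xx_{i-1}}(P)\to\Gamma_{\xx_{i-1}}(P)_{x_i})=\check{H}^1_{x_i}(\Gamma_{\xx_{i-1}}(P))$, yields the exact sequence
\[
0\to\check{H}^1_{x_i}(\Gamma_{\xx_{i-1}}(P))\to\check{H}^1_{\xx_i}(P)\to\check{H}^1_{\xx_{i-1}}(P).
\]
Thus (iv) forces the left-hand term to vanish, giving (iii); conversely, by induction on $i$, with base case $\xx_0$ empty, so $\Gamma_{\xx_0}=\id$ and $\check{H}^1_{\xx_1}(P)=\check{H}^1_{x_1}(P)$, the inductive vanishing of $\check{H}^1_{\xx_{i-1}}(P)$ together with (iii) squeezes $\check{H}^1_{\xx_i}(P)$ to zero.

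I expect the main obstacle to be the duality underlying (i)$\Longleftrightarrow$(iii). Its easy half (pro-zero forces the colimit to vanish) is immediate, since a zero transition map $A_m\to A_n$ dualizes to a zero map $\Hom_R(A_n,I)\to\Hom_R(A_m,I)$. The converse is delicate: from a system failing to be pro-zero one must manufacture an injective module $I$ --- concretely a product of injective hulls of suitable quotients $A_n/\im\phi_{n,k}$ --- together with a family $(f_n)\in\prod_n\Hom_R(A_n,I)$ whose class survives in $\varinjlim_n\Hom_R(A_n,I)$, thereby detecting the failure. Alongside this, the careful bookkeeping of the iterated $\varprojlim$/$\varinjlim$, the diagonal cofinality, and the compatibility of all transition maps must be carried out in the non-Noetherian setting, where $\check{H}^0_{\xx_{i-1}}=\Gamma_{\xx_{i-1}}$ still holds but the higher \v{C}ech functors need not be ordinary local cohomology.
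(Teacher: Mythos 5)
Your overall plan is sound, and on the only part the paper actually proves in detail --- the equivalence (iii) $\Longleftrightarrow$ (iv) --- you take essentially the same route: the paper invokes the short exact sequence $0 \to \check{H}^1_{x_i}(\check{H}^0_{\xx_{i-1}}(X)) \to \check{H}^1_{\xx_i}(X) \to \check{H}^0_{x_i}(\check{H}^1_{\xx_{i-1}}(X)) \to 0$ from \cite[6.1.11]{SS}, together with $\Gamma_{\xx_{i-1}}(X) \cong \check{H}^0_{\xx_{i-1}}(X)$ and the same induction on $i$; you rederive the needed left-hand portion of that sequence from the decomposition $\check{C}_{\xx_i} = \check{C}_{\xx_{i-1}} \otimes_R \check{C}_{x_i}$, which is exactly how the cited sequence arises. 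For the equivalence of (i), (ii), (iii) the paper gives no argument at all, citing \cite[Theorem 2.1]{Sp12}; your proposal is therefore more self-contained there, and both ingredients are correct: flat base change of Koszul homology does show that zero transition maps stay zero (giving (i) $\Leftrightarrow$ (ii) with $F=R$ for the converse), and the chain of identifications $\Gamma_{\xx_{i-1}}(\Hom_R(M,I)) \cong \varinjlim_m \Hom_R(M/\xx_{i-1}^{(m)}M,I)$, commutation of $\check{H}^1_{x_i} \cong \varinjlim_n H^1(x_i^n;\cdot)$ with filtered colimits, Koszul duality against the injective $I$, and cofinality of the diagonal in the bi-indexed system legitimately yield $\check{H}^1_{x_i}(\Gamma_{\xx_{i-1}}(\Hom_R(M,I))) \cong \varinjlim_n \Hom_R(H_1(x_i^n;M/\xx_{i-1}^{(n)}M),I)$, with the transition maps being exactly those of the inverse systems in \ref{apa-6}.

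One concrete misstep, in the part you yourself flag as the main obstacle: your proposed test module for the converse of the duality --- a product of injective hulls of the quotients $A_n/\im\phi_{n,k}$ --- is the wrong choice. Any homomorphism factoring through $A_n/\im\phi_{n,k}$ kills $\im\phi_{n,k}$, so it composes to \emph{zero} with $\phi_{n,k}$; such maps can only witness pro-zeroness, never its failure. What you need is a map on $A_n$ that does \emph{not} vanish on any of the images $\im\phi_{n,m}$, and the fix is simpler than your sketch suggests: if $\{A_n\}$ fails to be pro-zero at index $n$, so that $\phi_{n,m} \neq 0$ for all $m \geq n$, take $I = E(A_n)$, the injective hull of $A_n$ itself, and let $f \colon A_n \hookrightarrow I$ be the embedding. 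Since $f$ is injective, $f \circ \phi_{n,m} = 0$ would force $\phi_{n,m} = 0$; hence $f \circ \phi_{n,m} \neq 0$ for all $m$, the class of $f$ survives in $\varinjlim_n \Hom_R(A_n,I)$, and (iii) fails for this $I$. With this correction the duality lemma is easy rather than delicate, and your proof of (i) $\Longleftrightarrow$ (iii) closes; your final worry about non-Noetherian $\check{H}^i_{\xx}$ is already neutralized by working throughout with colimits of Koszul (co)homology as in \ref{apa-2}(a).
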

	
	\begin{proof}
		For the equivalence of the first three conditions we refer to \cite[Theorem 2.1]{Sp12}. For the proof of 
		(iii) $\Longleftrightarrow$ (iv) we put $X = \Hom_R(M,I)$ and recall the following short 
		exact sequence 
		\[
		0 \to \check{H}^1_{x_i}(\check{H}^0_{\xx_{i-1}}(X) ) \to \check{H}^1_{\xx_i}(X) 
		\to \check{H}^0_{x_i}(\check{H}^1_{\xx_{i-1}}(X)) \to 0
		\]
		for $i = 1,\ldots,r$, (see \cite[6.1.11]{SS} or \cite[8.1 (b)]{Sp11}). Then note that $\Gamma_{\xx_{i-1}}(X) 
		\cong \check{H}^0_{\xx_{i-1}}(X)$. If (iv) holds the claim in (iii) follows easily. For the converse 
		we have $\check{H}^1_{\xx_i}(X) \cong \check{H}^1_{x_i}(\check{H}^0_{\xx_{i-1}}(X)) = 0$ 
		for $i = 1,\ldots,r$ and inductively 
		the vanishing of $\check{H}^1_{\xx_i}(X)$ for $i = 1,\ldots,r$, recall that $\check{H}^1_{\xx_{i-1}}(X) = 0$ 
		by the inductive step. This proves (iii).  
	\end{proof}
	
	Recall that \ref{apa-7} provides a characterization of $M$-pro-regular sequences 
	in terms of \v{C}ech cohomology. 
	In the following we shall prove a characterization in terms of \v{C}ech homology.  
	This depends upon the 
	results of pro-zero inverse systems as shown above. 
	
	\begin{theorem} \label{apa-8}
		Let $\xx = x_1,\ldots,x_r$ denote a sequence of elements of $R$. For an $R$-module $M$ the following 
		conditions are equivalent: 
		\begin{itemize}
			\item[(i)] The sequence $\xx$ is $M$-pro-regular. 
			\item[(ii)] $\check{H}^{x_i}_0(\Lambda^{\xx_{i-1}}(M^{(S)}) )\cong 
			\Lambda^{\xx_i}(M^{(S)})$ and $\check{H}^{x_i}_1(\Lambda^{\xx_{i-1}}(M^{(S)})) = 0$ 
			for $i = 1,\ldots,r$ and any set $S$.
			\item[(iii)] $\check{H}^{x_i}_0(\Lambda^{\xx_{i-1}}(X)) \cong 
			\Lambda^{\xx_i}(X)$  and $\check{H}^{x_i}_1(\Lambda^{\xx_{i-1}}(X)) = 0$ for $i = 1,\ldots,r$ and $X = M,M[T]$.
			\item[(iv)] $\check{H}_0^{\xx_i}(X) \cong \Lambda^{\xx_i}(X)$ and 
			$\check{H}_1^{\xx_i}(X) = 0$ for $i = 1,\ldots,r$ and $X = M,M[T]$.
			\item[(v)] $\Lambda^{\xx_{i-1}}(X)$ is of bounded $x_i$-torsion for $i = 1,\ldots,r$ and $X = M,M[T]$.
		\end{itemize}
	\end{theorem}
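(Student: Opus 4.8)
The plan is to reduce every assertion to Theorem~\ref{apa-0}, applied once for each index $i \in \{1,\ldots,r\}$ with the single element $x := x_i$ and the decreasing filtration $M_n := \xx_{i-1}^{(n)} M$. First I would record the three identifications that make the translation exact. Since direct sums commute with multiplication by the $x_j$, one has $(\xx_{i-1}^{(n)} M)^{(S)} = \xx_{i-1}^{(n)}(M^{(S)})$, so $\Lambda(\mathcal{M}^{(S)}) = \varprojlim M^{(S)}/\xx_{i-1}^{(n)} M^{(S)} = \Lambda^{\xx_{i-1}}(M^{(S)})$; likewise $(x_i^n M^{(S)}, \xx_{i-1}^{(n)} M^{(S)}) = \xx_i^{(n)} M^{(S)}$ yields $\Lambda((\mathcal{M}/x_i\mathcal{M})^{(S)}) = \Lambda^{\xx_i}(M^{(S)})$; and condition \ref{apa-0}~(i) for this data reads that $\{H_1(x_i^n;M/\xx_{i-1}^{(n)}M)\}_{n}$ is pro-zero, which is precisely the $i$-th defining condition of $M$-pro-regularity in \ref{apa-6}. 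Using $M[T] \cong M^{(\mathbb{N})}$ and specializing $S$ to a singleton (giving $X = M$) resp. to $\mathbb{N}$ (giving $X = M[T]$), the conditions \ref{apa-0}~(i), (ii), (iii) become, verbatim, the conditions (i), (ii), (iii) of the present theorem for that fixed $i$. Forming the conjunction over $i = 1,\ldots,r$ then gives (i) $\Leftrightarrow$ (ii) $\Leftrightarrow$ (iii) at once.

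To pass between the ``one-step'' groups $\check{H}^{x_i}_\bullet(\Lambda^{\xx_{i-1}}(X))$ of (iii) and the ``total'' groups $\check{H}^{\xx_i}_\bullet(X)$ of (iv), I would use the factorization $\mathcal{L}_{\xx_i} \cong \mathcal{L}_{\xx_{i-1}} \otimes_R \mathcal{L}_{x_i}$ together with the adjunction $\Hom_R(\mathcal{L}_{\xx_{i-1}} \otimes_R \mathcal{L}_{x_i}, X) \cong \Hom_R(\mathcal{L}_{x_i}, \Hom_R(\mathcal{L}_{\xx_{i-1}}, X))$. As $\mathcal{L}_{x_i}$ resolves the two-term complex $\check{C}_{x_i}$, the resulting composition spectral sequence lives in the two columns $p = 0, 1$ and degenerates into
\[
0 \to \check{H}^{x_i}_0(\check{H}^{\xx_{i-1}}_1(X)) \to \check{H}^{\xx_i}_1(X) \to \check{H}^{x_i}_1(\check{H}^{\xx_{i-1}}_0(X)) \to 0
\]
together with $\check{H}^{\xx_i}_0(X) \cong \check{H}^{x_i}_0(\check{H}^{\xx_{i-1}}_0(X))$; this is the homological counterpart of the sequence used in the proof of \ref{apa-7} (see \cite[6.1.11]{SS}, \cite[8.1]{Sp11}). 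Arguing by induction on $i$ (the case $i = 1$ being trivial since $\Lambda^{\xx_0} = \id$ and $\check{H}^{\xx_1}_\bullet = \check{H}^{x_1}_\bullet$), the inductive hypothesis $\check{H}^{\xx_{i-1}}_1(X) = 0$, $\check{H}^{\xx_{i-1}}_0(X) \cong \Lambda^{\xx_{i-1}}(X)$ collapses the sequence to $\check{H}^{\xx_i}_1(X) \cong \check{H}^{x_i}_1(\Lambda^{\xx_{i-1}}(X))$ and $\check{H}^{\xx_i}_0(X) \cong \check{H}^{x_i}_0(\Lambda^{\xx_{i-1}}(X))$; this identifies (iii) and (iv) for the index $i$, so (iii) $\Leftrightarrow$ (iv) follows.

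The equivalence with (v) is where I expect the genuine difficulty, because (v) asserts bounded $x_i$-torsion of the \emph{completed} module $\Lambda^{\xx_{i-1}}(X)$, i.e. pro-zeroness of $\{H_1(x_i^n;\Lambda^{\xx_{i-1}}(X))\}_n = \{\varprojlim_m H_1(x_i^n; X/\xx_{i-1}^{(m)}X)\}_n$, whereas $M$-pro-regularity is pro-zeroness of the \emph{diagonal} system $\{H_1(x_i^n; X/\xx_{i-1}^{(n)}X)\}_n$, and the completion does not commute with the passage to $M[T] = M^{(\mathbb{N})}$ (so that $\Lambda^{\xx_{i-1}}(M)[T] \neq \Lambda^{\xx_{i-1}}(M[T])$ in general). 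One direction is cheap: since $\Lambda^{\xx_{i-1}}$ is left exact and $M \hookrightarrow M[T]$ is a split injection, $\Lambda^{\xx_{i-1}}(M)$ is a submodule of $\Lambda^{\xx_{i-1}}(M[T])$, and a submodule of a module of bounded $x_i$-torsion again has bounded $x_i$-torsion; this reduces the content for both $X = M$ and $X = M[T]$ to the case $M[T]$ alone. The hard part is to reconcile the inner-limit-first system with the diagonal one: here I would invoke \ref{gm+} to control the term $\varprojlim_n \varprojlim^1_m H_1(x_i^n; X/\xx_{i-1}^{(m)}X)$, Roos' spectral sequence (the exact sequence $(\#)$ in the proof of \ref{apa-0}, see \cite{Rje}) to relate the $\varprojlim^1$ of the diagonal to the bi-indexed $\varprojlim^1$, and \ref{apa-2} to read off the Čech homology of the completion. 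This bi-indexed bookkeeping, matching \ref{apa-0}~(iv)/(v) against the bounded-torsion formulation of (v), is the main obstacle; and it is precisely to supply both the $\varprojlim$ and the $\varprojlim^1$ halves required by \ref{apn-3} and Roos that every condition of the theorem is split into the two cases $X = M$ and $X = M[T]$.
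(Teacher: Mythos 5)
Your handling of (i) $\Leftrightarrow$ (ii) $\Leftrightarrow$ (iii) and of (iii) $\Leftrightarrow$ (iv) is correct and is essentially identical to the paper's proof: the paper likewise feeds Theorem \ref{apa-0} the data $x = x_i$, $M_n = \xx_{i-1}^{(n)}M$ (using, as you do, that Koszul homology and the colon modules commute with direct sums, via $\xx_{i-1}^{(n)} M^{(S)} :_{M^{(S)}} x_i^n/\xx_{i-1}^{(n)} M^{(S)} \cong H_1(x_i^n;H_0(\xx_{i-1}^{(n)};M^{(S)}))$), and it likewise runs increasing induction on $i$ through the short exact sequences $0 \to \check{H}^{x_i}_0(\check{H}^{\xx_{i-1}}_j(X)) \to \check{H}^{\xx_i}_j(X) \to \check{H}^{x_i}_1(\check{H}^{\xx_{i-1}}_{j-1}(X)) \to 0$ of \cite[8.1]{Sp11}, which is exactly the degenerate two-column spectral sequence you describe.

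The genuine gap is the equivalence with (v): you correctly diagnose it as the hard point and list the right tools (\ref{apa-2}, \ref{gm+}, the Roos sequence $(\#)$, \ref{apn-3}), but you never execute either direction, and an unproven ``main obstacle'' is exactly what a proof must supply. For comparison, the paper's execution is: for (v) $\Rightarrow$ (iii), bounded $x_i$-torsion makes $\{H_1(x_i^n;\Lambda^{\xx_{i-1}}(X))\}_n$ pro-zero, hence $\varprojlim = \varprojlim^1 = 0$, and \ref{apa-2} together with the interchange $\varprojlim_n \varprojlim_m X/(x_i^n,\xx_{i-1}^{(m)})X \cong \Lambda^{\xx_i}(X)$ yields (iii); for (iii) $\Rightarrow$ (v), the delicate half is extracting $\varprojlim{}^1_n H_1(x_i^n;\Lambda^{\xx_{i-1}}(X)) = 0$, since (iii) asserts $\check{H}^{x_i}_0(\Lambda^{\xx_{i-1}}(X)) \cong \Lambda^{\xx_i}(X)$ and not $\cong \Lambda^{x_i}(\Lambda^{\xx_{i-1}}(X))$; the paper factors the given isomorphism through the two surjections $\check{H}^{x_i}_0(\Lambda^{\xx_{i-1}}(X)) \twoheadrightarrow \Lambda^{x_i}(\Lambda^{\xx_{i-1}}(X)) \twoheadrightarrow \Lambda^{\xx_i}(X)$ (the first from \ref{apa-2}, the second from \ref{gm+}), forcing both to be isomorphisms, and then transfers the two vanishings to the diagonal system $\{H_1(x_i^n;X/\xx_{i-1}^{(n)}X)\}_n$ via the degreewise surjective inverse systems of Koszul complexes and the sequence $(\#)$ with $m,n$ reversed, so that \ref{apn-3} applies to $X = M$ and $X = M[T]$. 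Note that your worry about $\Lambda^{\xx_{i-1}}(M)[T] \neq \Lambda^{\xx_{i-1}}(M[T])$ is well placed, but the paper never compares these two modules: it works throughout with the bi-indexed system $\{H_1(x_i^n;X/\xx_{i-1}^{(m)}X)\}$, whose limits can be taken in either order. Finally, your ``cheap direction'' is misjustified as stated: $\Lambda^{\xx_{i-1}}$ is not left exact in general; the reduction survives only because an additive functor preserves the split injection $M \to M[T]$, making $\Lambda^{\xx_{i-1}}(M)$ a direct summand of $\Lambda^{\xx_{i-1}}(M[T])$ --- and the paper does not need this step anyway, since (iii) already carries both cases $X = M$ and $X = M[T]$.
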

	
	\begin{proof}
		First note that $\xx$ is $M^{(S)}$-pro-regular for any set $S$. It turns out since $R/\xx_i^{(n)}R$ is finitely generated and $\Hom_R(R/\xx_i^{(n)}R,  \cdot)$ commutes with direct sums.  Because of 
		\[
		\xx_{i-1}^{(n)} M^{(S)} :_ {M^{(S)}} 
		x_i^n /\xx_{i-1}^{(n)} M^{(S)}  \cong H_1(x_i^n;H_0(\xx_{i-1}^{(n)};M^{(S)}))
		\]
		for all $n \geq 0$ and $i = 1, \ldots,r$, it follows that the corresponding 
		inverse systems are isomorphic and pro-zero. Note that 
		$H_0(\xx_{i-1}^{(n)};M^{(S)}) \cong M^{(S)}/\xx_{i-1}^{(n)} M^{(S)}$. 
		Moreover the condition 
		and 
		Theorem \ref{apa-0} proves the equivalence of the first three statements.\\
		(iii) $\Longleftrightarrow $(iv) :  By view of  \cite[8.1]{Sp11} there are short exact sequences 
		\[
		0 \to \check{H}^{x_i}_0(\check{H}^{\xx_{i-1}}_j(X )) \to \check{H}^{\xx_i}_j(X) \to 
		\check{H}^{x_i}_1(\check{H}^{\xx_{i-1}}_{j-1}(X) ) \to 0 	\tag{$\dagger$}
		\]
		for $i = 1,\ldots,r$ and $j = 0,1$.  Then the equivalence is easily seen by the exact sequences. More precisely, 
		(iii) $\Longrightarrow$ (iv) 
		follows by increasing induction on $i$ starting at $i =1$. The converse follows similarly. 	\\  
		(v) $\Longrightarrow$ (iii): The assumption in (v) implies the vanishing 
		\[
		\varprojlim H_1(x_i^n ;\Lambda^{\xx_{i-1}}(X)) = \varprojlim{}^1 H_1(x_i^n ; \Lambda^{\xx_{i-1}}(X)) = 0.
		\] 
		By virtue of \ref{apa-2} it follows that $\check{H}_1^{\xx_i}(\Lambda^{\xx_{i-1}}(X)) = 0$ and 
		$\check{H}_0^{\xx_i}(\Lambda^{\xx_{i-1}}(X)) \cong \varprojlim H_0(x_i^n; \Lambda^{\xx_{i-1}}(X))$. 
		Now  we have $\varprojlim H_0(x_i^n; \Lambda^{\xx_{i-1}}(X)) \cong \varprojlim_n \varprojlim_m 
		X/(x_i^n,\xx_{i-1}^{(m)})X \cong \Lambda^{\xx_i}(X)$, which proves the claim in (iii).\\
		(iii) $\Longrightarrow $(v): The statement yields $\varprojlim H_1(x_i^n;\Lambda^{\underline{x}_{i-1}}(X)) = 
		\varprojlim{}^1H_1(x_i^n;\Lambda^{\underline{x}_{i-1}}(X))= 0$. For a fixed $n$ and $j = 0,1$ 
		we have the short exact sequences
		\[
		0 \to \varprojlim{}^1_m H_{j+1}(x_i^n; X/\xx_{i-1}^{(m)}X) \to H_j(x_i^n;\Lambda^{\underline{x}_{i-1}}(X)) 
		\to  \varprojlim{}_m H_j(x_i^n; X/\xx_{i-1}^{(m)}X) \to 0.
		\]
		This follows since the inverse system for  $\varprojlim{}_m K_{\bullet}( x_i^n;X/\xx_{i-1}^{(m)}X) 
		\cong K_{\bullet}(x_i^n;\Lambda^{\underline{x}_{i-1}}(X) )$ has degree wise surjective maps. For $j = 1$ 
		it yields that 
		\[
		0 = \varprojlim{}_n H_1(x_i^n;\Lambda^{\underline{x}_{i-1}}(X)) \cong 
		\varprojlim{}_n  \varprojlim{}_m H_1(x_i^n; X/\xx_{i-1}^{(m)}X) \cong 
		\varprojlim{}_n H_1(x_i^n; X/\xx_{i-1}^{(n)}X).
		\]
		It remains to show the vanishing of $ \varprojlim{}^1_n H_1(x_i^n;  X/\xx_{i-1}^{(n)}X)$. First note that 
		the above short exact sequence for $j = 1$ provides that 
		$\varprojlim^1_n  \varprojlim{}_m H_1(x_i^n; X/\xx_{i-1}^{(m)}X) = 0$. The same sequence for $j=0$ yields that 
		$\varprojlim_n  \varprojlim{}^1_m H_1(x_i^n; X/\xx_{i-1}^{(m)}X) = 0$. Then the above sequence $(\#)$ (see proof of \ref{apa-0})
		with $m,n$ reversed proves the vanishing $ \varprojlim{}^1_n H_1(x_i^n; X/\xx_{i-1}^{(n)}X) =0$.
	\end{proof}
	
	\section{A global variation} 
	As before, let $R$ denote a commutative ring. For an element $f \in R$ we write $D(f) = \Spec R \setminus V(f)$. Note that 
	$D(f)$ is an open set in the Zariski topology of $\Spec R$. For $f 
	\in R$ there is a natural map $\Spec R_f \to \Spec R$ that induces a homeomorphism between $\Spec R_f$ 
	and $D(f)$. Since $\Spec R = \cup_{f \in R} D(f)$ and since $\Spec R$ is quasi-compact 
	there are finitely many $f_1, \ldots, f_r \in R$ such that $\Spec R = \cup_{i=1}^r D(f_i)$. 
	Then we recall the following definitions (see \cite{Sp12}).
	
	\begin{definition} \label{apa-9}
		(A) A sequence $\ff = f_1,\ldots,f_r$ of elements of $R$ is called a covering sequence if $\Spec R = 
		\cup_{i=1}^r D(f_i)$. This is equivalent to saying that $R = \ff R$. Moreover, if $\ff$ is a covering sequence 
		then the natural map $M \to \oplus_{i=1}^r M_{f_i}$ is injective for any $R$-module $M$  as easily seen. \\
		(B) An ideal $\mathcal{I} \subset R$ is 
		called an effective Cartier divisor 
		if there is a covering sequence $\ff = f_1,\ldots,f_r$ such that $\mathcal{I} R_{f_i}  = x_i R_{f_i}, i = 1,\ldots,r,$ 
		for non-zerodivisors $x_i/1$ of $R_{f_i}$ with $x_i \in R$. It follows that $\mathcal{I} \subseteq (x_1,\ldots,x_r)R$. \\
		(C) Let $\mathcal{I}$ denote an effective Cartier divisor and $x \in R $. The pair $(\mathcal{I}, x)$ is 
		called pro-regular if 
		for any integer $n$ there is an integer $m\geq n$ such that $\mathcal{I}^m: x^m  \subseteq 
		\mathcal{I}^n : x^{m-n}$ . This is  consistent with the definition in \cite{GM} (see \ref{apa-6}) and is equivalent 
		to the fact that for each $n$ there is an integer $m \geq n$ such that the multiplication map 
		$
		\mathcal{I}^m :_R x^m /\mathcal{I}^m \stackrel{x^{m-n}}{\longrightarrow} 
		\mathcal{I}^n :_R x^n /\mathcal{I}^n
		$
		is the zero map. Moreover, the pair  $(\mathcal{I},x)$ is  pro-regular if and only if the inverse system 
		$\{H_1(x^n;R/\mathcal{I}^n)\}_{n \geq 1}$ is pro-zero.
	\end{definition}
	
	For the following we need a technical result about Cartier divisors and their relation to pro-regularity.

	\begin{lemma} \label{apa-y}
		Let $\mathcal{I} \subseteq R$ be an effective Cartier divisor with the covering sequence 
		$\ff = f_1,\ldots, f_r$ such that $\mathcal{I} R_{f_i}  = x_i R_{f_i}, i = 1,\ldots,r,$ for 
		non-zerodivisors $x_i/1$ of $R_{f_i}$.  For an element $x \in R$ the following conditions are equivalent:
		\begin{itemize}
			\item[(i)]  $R/\mathcal{I}$ is of bounded $x$-torsion. 
			\item[(ii)] $R_{f_i}/x_i R_{f_i}$ is of bounded $x/1$-torsion for $i = 1,\ldots,r$.
			\item[(iii)] $x_i/1, x/1$ is pro-regular in $R_{f_i}$ for $i = 1,\ldots,r$ in the sense of \ref{apa-6}. 		
			\item[(iv)] $(\mathcal{I},x)$ is pro-regular in the sense of \ref{apa-9}.
		\end{itemize}
	\end{lemma}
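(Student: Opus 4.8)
The plan is to reduce the global statement to a purely local one along the covering sequence $\ff$, and to isolate a single principal equivalence as the heart of the matter. First I would record a local-global principle for pro-zero inverse systems: if $\{N_n\}_{n\ge 0}$ is an inverse system of $R$-modules and $\ff=f_1,\dots,f_r$ is a covering sequence, then $\{N_n\}$ is pro-zero if and only if each localization $\{(N_n)_{f_i}\}$ is pro-zero. One direction is immediate, since localization sends the zero map to the zero map; for the converse, fixing $n$ and choosing for each $i$ an index $m_i$ that kills the transition map after localization, the index $m=\max_i m_i$ yields a transition map $\phi_{n,m}$ all of whose localizations vanish, so $\im\phi_{n,m}$ vanishes on the cover and is therefore $0$ by the injectivity of $M\to\oplus_i M_{f_i}$ recorded in \ref{apa-9}(A). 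Alongside this I would note the identifications $(R/\mathcal{I})_{f_i}\cong R_{f_i}/x_iR_{f_i}$ and $(R/\mathcal{I}^n)_{f_i}\cong R_{f_i}/x_i^nR_{f_i}$ (using $\mathcal{I}R_{f_i}=x_iR_{f_i}$), together with the fact that both $0:_{(\cdot)}x^n=\Hom_R(R/x^nR,\cdot)$ and Koszul homology commute with localization.

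With these in hand, two of the equivalences become formal. For (i)$\Leftrightarrow$(ii) I would apply the principle to the system $\{0:_{R/\mathcal{I}}x^n\}$, whose localization at $f_i$ is $\{0:_{R_{f_i}/x_iR_{f_i}}(x/1)^n\}$, invoking \ref{apa-3}(A) to read bounded torsion as pro-zero-ness. For (iii)$\Leftrightarrow$(iv) I would apply it to $\{H_1(x^n;R/\mathcal{I}^n)\}$, whose localization at $f_i$ is $\{H_1((x/1)^n;R_{f_i}/x_i^nR_{f_i})\}$; since $x_iR_{f_i}$ is principal on the non-zerodivisor $x_i/1$, the $i=1$ Koszul homology vanishes automatically, so this last system being pro-zero is exactly the statement that $x_i/1,x/1$ is $R_{f_i}$-pro-regular in the sense of \ref{apa-6}, while \ref{apa-9}(C) identifies the global pro-zero-ness with pro-regularity of $(\mathcal{I},x)$.

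Everything then rests on a local principal equivalence: for the non-zerodivisor $y:=x_i/1$ in $S:=R_{f_i}$ and $z:=x/1$, that $S/yS$ is of bounded $z$-torsion if and only if $\{H_1(z^n;S/y^nS)\}$ is pro-zero. For the forward direction I would first check that bounded $z$-torsion is stable under extensions: in $0\to A\to B\to C\to 0$ with bounds $k_1,k_0$, any $b$ with $z^kb=0$ and $k\ge k_0+k_1$ satisfies $z^{k_0}b\in A$ and hence $z^{k_0+k_1}b=0$, so $B$ has bound $k_0+k_1$. Feeding the short exact sequences $0\to S/yS\xrightarrow{y^{n-1}}S/y^nS\to S/y^{n-1}S\to 0$ into an induction, each $S/y^nS$ is then of bounded $z$-torsion; since $z^ms\in y^mS$ forces $z^ms\in y^nS$, the bound for $S/y^nS$ gives $z^{m-n}s\in y^nS$ once $m-n$ exceeds it, which is precisely pro-zero-ness.

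The hard part will be the converse, recovering bounded $z$-torsion of the single quotient $S/yS$ from pro-regularity of the entire $y$-adic filtration; this is where the effective Cartier (non-zerodivisor) hypothesis is genuinely needed, since elements pushed deep into the filtration die for trivial reasons and carry no information about $S/yS$. My plan there is to translate the hypothesis through \ref{apa-8}(v) into bounded $z$-torsion of both $\Lambda^{y}(S)$ and $\Lambda^{y}(S[T])$, and then transfer it across $0\to\Lambda^{y}(S)\xrightarrow{y}\Lambda^{y}(S)\to S/yS\to 0$ (where $y$ remains a non-zerodivisor on the completion) via the associated Koszul long exact sequence, with the $S[T]$-component supplying, through \ref{apn-5}, the Mittag-Leffler control that upgrades the vanishing of $\varprojlim$ and $\varprojlim^1$ to genuine pro-zero-ness; should this transfer prove too delicate, one may instead invoke the principal case already established in \cite{Sp12}.
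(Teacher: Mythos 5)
Your reduction scaffolding coincides with the paper's: the paper proves (i) $\Leftrightarrow$ (ii) and (iii) $\Leftrightarrow$ (iv) by exactly your localization argument, packaged as commutative diagrams whose horizontal maps are the injections $N \to \oplus_{i} N_{f_i}$ from \ref{apa-9}(A), and it likewise isolates the principal local equivalence (ii) $\Leftrightarrow$ (iii) as the heart. Your forward half of that heart is correct and is in substance the paper's argument: the paper notes that $x_i/1,x/1$ is pro-regular in $S:=R_{f_i}$ if and only if $S/y^kS$ (with $y:=x_i/1$) is of bounded $z$-torsion ($z:=x/1$) for \emph{all} $k\geq 1$, and reduces ``all $k$'' to ``$k=1$'' by induction on $0 \to y^kS/y^{k+1}S \to S/y^{k+1}S \to S/y^kS \to 0$ using $y^kS/y^{k+1}S \cong S/yS$ --- your extension-closedness computation plus the diagonal choice $m=n+b_n$ is exactly this.

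The genuine gap is the converse of the local heart, i.e.\ recovering bounded $z$-torsion of $S/yS$ from pro-regularity of the pair $(y,z)$, and your proposal does not close it. The route through \ref{apa-8}(v) does give bounded $z$-torsion of $\Lambda^{y}(S)$ and $\Lambda^{y}(S[T])$, but the transfer across $0 \to \Lambda^{y}(S) \stackrel{y}{\to} \Lambda^{y}(S) \to S/yS \to 0$ breaks at a specific point: writing $\hat{S}=\Lambda^y(S)$, the Koszul long exact sequence for $z^n$ presents $0:_{S/yS}z^n$ as an extension of $(z^n\hat{S}:_{\hat{S}}y)/z^n\hat{S} \cong 0:_{\hat{S}/z^n\hat{S}}y$ by a quotient of $0:_{\hat{S}}z^n$, and nothing in your hypotheses controls the inverse system of \emph{$y$-torsion} modules $\{0:_{\hat{S}/z^n\hat{S}}y\}_{n}$; the Mittag--Leffler input from \ref{apn-5} again concerns only the $z$-torsion systems, so the upgrade to pro-zero cannot be completed as described. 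Your fallback --- ``invoke the principal case already established in \cite{Sp12}'' --- is at best unsupported: by the paper's own account (see the remark following \ref{cor-6}, on \cite[Corollary 4.5]{Sp12}), that paper established only the implication from bounded torsion to pro-regularity, and the converse is precisely part of what the present lemma contributes. So as written your proposal proves (i) $\Rightarrow$ (iv) and the two localization equivalences, but leaves the new implication (iv) $\Rightarrow$ (i) --- equivalently the ``only if'' half of the paper's intermediate claim that pro-regularity of $x_i/1,x/1$ forces bounded $x/1$-torsion of the quotients $S/y^kS$ --- without proof.
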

	
	\begin{proof}
		(i) $\Longleftrightarrow$ (ii): For each pair of integers $m \geq n\geq 1$ we have the 
		following commutative diagram where the horizontal maps are injections
		\[
		\begin{array}{ccc}
			\mathcal{I} :_R x^m/\mathcal{I} & \to & \oplus_{j=1}^r  (x_i  R_{f_i} :_{R_{f_i} }  x^m/1)/x_i R_{f_i}\\
			\downarrow^{x^{m-n}}& & \downarrow ^{\oplus (x^{m-n}/1)}\\
			\mathcal{I} :_R x^n/\mathcal{I}& \to & \oplus_{j=1}^r  (x_i  R_{f_i} :_{R_{f_i} }  x^n/1)/x_i R_{f_i}
		\end{array}
		\]
		which proves the equivalence. \\
		(ii) $\Longleftrightarrow$ (iii): Note that $x_i/1, x/1$ is pro-regular if and and only if $R_{f_i}/x_i^k R_{f_i}$ 
		is of bounded $x/1$-torsion for all $k \geq 1$. The equivalence follows easily: First note that 
		$x_i/1$ is $R_{f_i}$-regular. Then use induction on the short exact sequence 
		$$
		0 \to x_i^kR_{f_i}/x_i^{k+1}R_{f_i} \to R_{f_i}/x_i^{k+1}R_{f_i} \to R_{f_i}/x_i^kR_{f_i} \to 0
		$$
		and recall that  $ x_i^kR_{f_i}/x_i^{k+1}R_{f_i} \cong  R_{f_i}/x_iR_{f_i}$.\\
		(iii) $\Longleftrightarrow$ (iv): 
		The equivalence comes out by the following modification  of the above commutative diagram 
		\[
		\begin{array}{ccc}
			\mathcal{I}^m :_R x^m/\mathcal{I}^m& \to & \oplus_{j=1}^r  (x_i ^mR_{f_i} :_{R_{f_i} }  x^m/1)/x_i^mR_{f_i}\\
			\downarrow^{x^{m-n}}& & \downarrow ^{\oplus (x^{m-n}/1)}\\
			\mathcal{I}^n :_R x^n/\mathcal{I}^n& \to & \oplus_{j=1}^r  (x_i ^nR_{f_i} :_{R_{f_i} }  x^n/1)/x_i^nR_{f_i}.
		\end{array}
		\]
		Recall that the horizontal maps are injective (see also \cite{Sp12}). 
	\end{proof}
	
	Next we apply the previous investigations to the case when the pair $(\mathcal{I},x)$ is 
	pro-regular in the sense of \ref{apa-9}.
	
	\begin{lemma} \label{apa-10}
		Let $\mathcal{I} \subseteq R$ be an effective Cartier divisor with the covering sequence $\ff = f_1,\ldots, f_r$ 
		such that $\mathcal{I} R_{f_i}  = x_i R_{f_i}, i = 1,\ldots,r,$ 
		for non-zerodivisors $x_i/1$ of $R_{f_i}$. For an element $x \in R$ the following conditions are equivalent:
		\begin{itemize}
			\item[(i)] $R/\mathcal{I}$ is of bounded $x$-torsion.
			\item[(ii)] $\check{H}^x_1( (R/\mathcal{I})[T]) = 0$ and $\check{H}^x_0((R/\mathcal{I})[T]) 
			\cong \Lambda^x((R/\mathcal{I})[T])$.
			\item[(iii)] $\check{H}_1^x(\Lambda^{\mathcal{I}}(X))= 0$ and $\check{H}_0^x(\Lambda^{\mathcal{I}}(X)) \cong 
			\Lambda^{(x,\mathcal{I})}(X)$ for $X = R, R[T]$.
			\item[(iv)] $\Lambda^{\mathcal{I}}(R)$ and $\Lambda^{\mathcal{I}}(R[T])$ are of bounded $x$-torsion.
		\end{itemize}
	\end{lemma}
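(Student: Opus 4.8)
The plan is to run everything through Theorem~\ref{apa-0} applied to the filtration $M = R$, $M_n = \mathcal{I}^n$, so that $\mathcal{M} = \{R/\mathcal{I}^n\}_{n\geq 1}$. First I would record the identifications that translate the abstract statements of \ref{apa-0} and \ref{gm+} into those of the lemma. Since $\mathcal{I}^n R^{(S)} = (\mathcal{I}^n)^{(S)}$, one has $\Lambda(\mathcal{M}^{(S)}) = \varprojlim R^{(S)}/\mathcal{I}^n R^{(S)} = \Lambda^{\mathcal{I}}(R^{(S)})$; taking $S$ a single point gives $\Lambda^{\mathcal{I}}(R)$, and via $R^{(\mathbb{N})} \cong R[T]$ taking $S = \mathbb{N}$ gives $\Lambda^{\mathcal{I}}(R[T])$. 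For the quotient filtration I would use the cofinality $(x,\mathcal{I})^{2n} \subseteq x^nR + \mathcal{I}^n \subseteq (x,\mathcal{I})^n$, which shows $\Lambda(\mathcal{M}/x\mathcal{M}) = \varprojlim R/(x^nR + \mathcal{I}^n) \cong \Lambda^{(x,\mathcal{I})}(R)$ and, for $S = \mathbb{N}$, $\cong \Lambda^{(x,\mathcal{I})}(R[T])$.

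With these identifications the equivalence of (i) and (iii) is almost immediate. By \ref{apa-y} together with \ref{apa-9}, condition (i) (that $R/\mathcal{I}$ is of bounded $x$-torsion) is equivalent to $(\mathcal{I},x)$ being pro-regular, i.e.\ to $\{H_1(x^n; R/\mathcal{I}^n)\}_{n\geq 1}$ being pro-zero, which is precisely condition (i) of \ref{apa-0} for the present filtration. On the other hand condition (iii) of \ref{apa-0} (its condition (ii) for a one-point set and for $S = \mathbb{N}$) becomes, after the identifications above, exactly condition (iii) of the lemma for $X = R$ and $X = R[T]$. Hence \ref{apa-0} gives (i) $\iff$ (iii) at once. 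For (i) $\iff$ (ii) I would instead apply Theorem~\ref{apa-4} in the one-element case $r = 1$ to the module $R/\mathcal{I}$ and the element $x$: its condition (i) reads that $x$ is $(R/\mathcal{I})$-weakly pro-regular, i.e.\ that $R/\mathcal{I}$ is of bounded $x$-torsion, while its condition (iv) is verbatim condition (ii) of the lemma.

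The remaining, genuinely delicate point is (iii) $\iff$ (iv), which I would prove by mirroring the argument of \ref{apa-8}(iii) $\iff$ (v), now for the general filtration $\{\mathcal{I}^n\}$ rather than a power filtration; the engines \ref{apa-2}, \ref{gm+} and Roos' exact sequence (the sequence $(\#)$ in the proof of \ref{apa-0}) are all stated for arbitrary decreasing filtrations, so they transfer. For (iv) $\Rightarrow$ (iii): bounded $x$-torsion of $\Lambda^{\mathcal{I}}(X)$ makes $\{H_1(x^n;\Lambda^{\mathcal{I}}(X))\}_n$ pro-zero, whence $\varprojlim_n H_1(x^n;\Lambda^{\mathcal{I}}(X)) = \varprojlim{}^1_n H_1(x^n;\Lambda^{\mathcal{I}}(X)) = 0$ by \ref{apn-1}(C); then \ref{apa-2} yields $\check{H}^x_1(\Lambda^{\mathcal{I}}(X)) = 0$ and $\check{H}^x_0(\Lambda^{\mathcal{I}}(X)) \cong \Lambda^x(\Lambda^{\mathcal{I}}(X))$, and \ref{gm+} upgrades the latter to $\Lambda^{(x,\mathcal{I})}(X)$ once $\varprojlim_n\varprojlim{}^1_m H_1(x^n; X/\mathcal{I}^m X)$ is seen to vanish. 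For (iii) $\Rightarrow$ (iv): using the degree-wise surjective system $\{K_{\bullet}(x^n; X/\mathcal{I}^m X)\}_m$ with its six-term sequence in $m$, together with the two Roos sequences for the bi-indexed family $\{H_1(x^n; X/\mathcal{I}^m X)\}_{n,m}$ (the diagonal being cofinal), I would pass from the \v{C}ech vanishing back to the vanishing of $\varprojlim$ and $\varprojlim{}^1$ of the relevant systems.

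I expect the hard part to be exactly this last equivalence, for two reasons. First, condition (iii) involves $\Lambda^{\mathcal{I}}(R[T]) = \varprojlim_m (R/\mathcal{I}^m)[T]$, which is genuinely different from $(\Lambda^{\mathcal{I}}(R))[T]$, so the variable $T$ cannot be pulled out of the completion. Second, the \v{C}ech hypotheses only deliver the vanishing of $\varprojlim$ and $\varprojlim{}^1$, whereas bounded $x$-torsion is the stronger pro-zero property; bridging this gap is precisely where I must feed in both $X = R$ and $X = R[T]$ simultaneously and invoke Emmanouil's criterion \ref{apn-5} (through its corollary \ref{apn-3}) to convert the paired $\varprojlim/\varprojlim{}^1$ vanishing into pro-zero-ness. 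Keeping careful track of which inverse limit (the diagonal versus the iterated $\varprojlim_n\varprojlim_m$) is in play at each step, and legitimately applying the cofinality of the diagonal to $\varprojlim{}^1$ over the countable index, will be the main bookkeeping obstacle.
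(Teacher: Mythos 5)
Your proposal is correct and takes essentially the same route as the paper's own proof: the paper likewise gets (i) $\iff$ (ii) from \ref{apa-y} together with \ref{apa-4} applied to $M = R/\mathcal{I}$ with the single element $x$, the equivalence with (iii) from \ref{apa-0} applied to the filtration $\{\mathcal{I}^n\}$, and (iii) $\iff$ (iv) from the argument of \ref{apa-8} combined with \ref{gm+} via the vanishing of $\varprojlim{}_n \varprojlim{}^1_m H_1(x^n;R/\mathcal{I}^m)$. The only difference is presentational: where the paper cites \ref{apa-8} and \ref{gm+} as black boxes, you spell out the filtration identifications (including the cofinality of $x^nR+\mathcal{I}^n$ with $(x,\mathcal{I})^n$) and re-run the \ref{apa-8} argument for $\{\mathcal{I}^n\}$ -- a legitimate reading, since that proof, resting on \ref{apa-2}, \ref{gm+}, Roos' sequences and \ref{apn-3}/\ref{apn-5}, transfers verbatim to general decreasing filtrations, and your flagged subtlety that $\Lambda^{\mathcal{I}}(R[T]) \neq \Lambda^{\mathcal{I}}(R)[T]$ is exactly the delicate point implicit in the paper's appeal to \ref{apa-8}.
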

	
	\begin{proof}
		First note that by \ref{apa-y} $\{H_1(x^n;R/\mathcal{I})\}_{n \geq 1}$ is pro-zero 
		if and only if $\{H_1(x^k;R/\mathcal{I}^k)\}_{k \geq 1}$ is pro-zero. 
		Then the equivalence of (i) and (ii) follows by \ref{apa-4}. 
		Moreover, by \ref{apa-0} the pro-zero property of the second inverse system  above 
		implies the equivalence to (iii). 
		Finally the equivalence of (iii) and (iv) is a consequence of \ref{apa-8} and \ref{gm+} since 
		$\varprojlim{}_n \varprojlim{}^1_m H_1(x^n;R/\mathcal{I}^m) = 0$. 
	\end{proof}
	
	In the following we shall give a comment of the previous investigations to the recent work 
	of Bhatt and Scholze (see \cite{BhS}) completing the results of 
	\cite{Sp12}. To this end let $p \in \mathbb{N}$ denote a prime number 
	and let $\mathbb{Z}_p := \mathbb{Z}_{\mathfrak{p}}$ the localization at  the prime ideal 
	$(p)= \mathfrak{p}\in \Spec \mathbb{Z}$. In the following let $R$ be a $\mathbb{Z}_p$-algebra. 
	
	\begin{definition} \label{def-5} (see \cite[Definition 1.1]{BhS})
		A prism is a pair $(R,\mathcal{I}$) consisting of a $\delta$-ring $R$ (see \cite[Remark 1.2]{BhS}) and 
		a Cartier divisor $\mathcal{I}$ on $R$ satisfying the following two conditions.
		\begin{itemize}
			\item[(a)] The ring $R$ is $(p,\mathcal{I})$-adic complete.
			\item[(b)] $p \in \mathcal{I} + \phi_R(\mathcal{I})R$, where $\phi_R$ is the lift of the Frobenius on $R$ 
			induced by its $\delta$-structure (see \cite[Remark 1.2]{BhS}).
		\end{itemize}
	\end{definition}
	
	With the previous definition there is the following application of our results.
	
	\begin{corollary} \label{cor-6}
		Let $(R, \mathcal{I})$ denote a prism. Then the following conditions are equivalent:
		\begin{itemize}
			\item[(i)] $\mathcal{I}$ is of bounded $p$-torsion.
			\item[(ii)] The pair $(\mathcal{I}, p)$ is pro-regular in the sense of \ref{apa-9}. 
			\item[(iii)]  $\check{H}^1_x(\Hom_R(R/\mathcal{I},I)) =0$  for any injective $R$-module $I$.
			\item[(iv)] $\check{H}_0^{pR}(\Lambda^{\mathcal{I}}(R^{(S)}) \cong \Lambda^{(pR,{\mathcal{I}})}(R^{(S)}))$ 
			and $\check{H}_1^{pR}(\Lambda^{\mathcal{I}}(R^{(S)}) = 0$ for any set $S$.
			\item [(v)] $\Lambda^{\mathcal{I}}(R^{(S)})$ and $\Lambda^{\mathcal{I}}(R^{(S)})$ 
			are of bounded $p$-torsion  for any set $S$.. 
			\item[(vi)] $\Lambda^{\mathcal{I}}(R)$ and $\Lambda^{\mathcal{I}}(R[T])$ are of bounded $p$-torsion. 
		\end{itemize}
	\end{corollary}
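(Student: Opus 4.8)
The plan is to read the six conditions as specializations, to the effective Cartier divisor $\mathcal I$ and the single element $x=p$, of the structural results of Sections 3 and 4; the prism hypothesis enters only through the fact that $\mathcal I$ is an effective Cartier divisor in the sense of \ref{apa-9}, which is exactly what makes \ref{apa-y} and \ref{apa-10} applicable. Indeed the conditions (a) and (b) of \ref{def-5} are not needed for the equivalences themselves, so the statement is really the intended $x=p$ instance of the pair $(\mathcal I,x)$, enriched by the injective criterion (iii) and the direct-sum versions (iv), (v). I would take condition (ii), that $(\mathcal I,p)$ is pro-regular, as the hub and prove every other item equivalent to it.

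First the two cheap equivalences. The equivalence (i) $\Longleftrightarrow$ (ii) is nothing but (i) $\Longleftrightarrow$ (iv) of \ref{apa-y}. For (i) $\Longleftrightarrow$ (iii) I would apply \ref{apa-7} to the one-term sequence $p$ and the module $R/\mathcal I$: for $r=1$ its conditions (iii) and (iv) coincide, and $p$ being $(R/\mathcal I)$-pro-regular is, by the remark in \ref{apa-6}, the same as $R/\mathcal I$ being of bounded $p$-torsion.

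Next the completion statements. Taking $M=R$ with the $\mathcal I$-adic filtration $M_n=\mathcal I^n$ in \ref{apa-0}, one has $\Lambda(\mathcal M^{(S)})=\Lambda^{\mathcal I}(R^{(S)})$ and $\Lambda((\mathcal M/p\mathcal M)^{(S)})=\Lambda^{(p,\mathcal I)}(R^{(S)})$, so the equivalence (i) $\Longleftrightarrow$ (ii) of \ref{apa-0} is precisely (ii) $\Longleftrightarrow$ (iv) here, while (i) $\Longleftrightarrow$ (iv) of \ref{apa-10} delivers (i) $\Longleftrightarrow$ (vi); restricting (v) to $S$ a point and $S=\mathbb N$ gives (v) $\Longrightarrow$ (vi) for free. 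Throughout I would use \ref{gm+} in the form $\Lambda^{p}(\Lambda^{\mathcal I}(R^{(S)}))\cong\Lambda^{(p,\mathcal I)}(R^{(S)})$, which is legitimate because the proof of \ref{apa-0} already manufactures $\varprojlim_n\varprojlim^1_m H_1(p^n;R^{(S)}/\mathcal I^m R^{(S)})=0$ out of pro-regularity.

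The one step with real content is (ii) $\Longrightarrow$ (v): that pro-regularity forces $\Lambda^{\mathcal I}(R^{(S)})$ to be of bounded $p$-torsion for \emph{every} set $S$. I expect this to be the main obstacle, since it cannot be read off formally from the vanishing in (iv): $p$-adic completion commutes neither with the infinite direct sum $(-)^{(S)}$ nor with $(-)[T]$, and by \ref{apn-2} the vanishing of $\varprojlim$ and $\varprojlim^{1}$ is strictly weaker than the pro-zero property one actually needs. Writing $N=\Lambda^{\mathcal I}(R^{(S)})=\varprojlim_m R^{(S)}/\mathcal I^m R^{(S)}$, I would first note $H_1(p^n;N)\cong\varprojlim_m H_1(p^n;R^{(S)}/\mathcal I^m R^{(S)})$, using that the second Koszul homology of a single element vanishes and the defining inverse system is degreewise surjective, and then aim to show $\{H_1(p^n;N)\}_n$ is pro-zero by invoking \ref{apn-3}: it suffices that $\varprojlim_n H_1(p^n;N[T])=\varprojlim^{1}_n H_1(p^n;N[T])=0$, where $H_1(p^n;N[T])=H_1(p^n;N)[T]$ as Koszul homology commutes with direct sums. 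The genuine difficulty, which I would treat by the bi-indexed Roos spectral sequence exactly as in the proof of \ref{apa-0}, is that $N[T]=(\varprojlim_m R^{(S)}/\mathcal I^m R^{(S)})[T]$ is \emph{not} $\varprojlim_m\big((R^{(S)}/\mathcal I^m R^{(S)})[T]\big)=\Lambda^{\mathcal I}(R^{(S\times\mathbb N)})$, so one must control the discrepancy between these two modules rather than apply the diagonal analysis naively. Once pro-zeroness of $\{H_1(p^n;N)\}_n$ is in hand, feeding it back through \ref{apa-0} and \ref{gm+} recovers (iv) and closes the cycle, so that (ii), (iv), (v) and (vi) become equivalent.
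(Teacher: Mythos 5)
Your reductions for (i), (ii), (iii), (iv) and (vi) coincide with what the paper actually does: its proof is the one-line citation of \ref{apa-y}, \ref{apa-10} and \ref{apa-7}, and your specializations --- \ref{apa-y} (i)$\Leftrightarrow$(iv) for (i)$\Leftrightarrow$(ii), \ref{apa-7} with $r=1$ and $M=R/\mathcal{I}$ for (iii), \ref{apa-0} (i)$\Leftrightarrow$(ii) with $M=R$, $M_n=\mathcal{I}^n$ for (iv), and \ref{apa-10} (i)$\Leftrightarrow$(iv) for (vi) --- are exactly how those citations unwind. You are also right that the prism axioms (a), (b) of \ref{def-5} play no role beyond $\mathcal{I}$ being an effective Cartier divisor, and that (v)$\Rightarrow$(vi) is the specialization $S=\{*\}$, $S=\mathbb{N}$.

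The gap is at (ii)$\Rightarrow$(v). You correctly spot the discrepancy $\Lambda^{\mathcal{I}}(R^{(S)})[T]\subsetneq\Lambda^{\mathcal{I}}(R^{(S)}[T])$, but you then leave the step as a declared intention, and the tool you name would not close it: Roos' sequence $(\#)$ compares the iterated limits of one bi-indexed system $\{H_1(p^n;R^{(S)}/\mathcal{I}^mR^{(S)})\}_{n,m}$ with each other, whereas your difficulty is a comparison of two different modules, $\bigl(\varprojlim_m A_{n,m}\bigr)[T]$ versus $\varprojlim_m\bigl(A_{n,m}[T]\bigr)$, and vanishing of $\varprojlim$ and $\varprojlim^1$ for the larger system does not descend to the smaller one. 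Moreover, no new argument is needed at this point, and the paper constructs none inside the corollary: condition (v) is the $M=R^{(S)}$ instance of the equivalences already proved in Sections 3 and 4. Since Koszul homology commutes with direct sums, (ii) makes $\{H_1(p^n;R^{(S)}/\mathcal{I}^nR^{(S)})\}_{n}$ pro-zero for \emph{every} $S$, including $S\times\mathbb{N}$, i.e.\ for $R^{(S)}[T]$; the module-theoretic chain behind \ref{apa-10} (i)$\Rightarrow$(iv) --- \ref{apa-0}, the sequence $(\#)$, \ref{gm+} with $\varprojlim_n\varprojlim^1_m H_1(p^n;R^{(S)}/\mathcal{I}^mR^{(S)})=0$, and the bounded-torsion step as in \ref{apa-8} (iii)$\Leftrightarrow$(v) --- then applies verbatim with $R^{(S)}$ in place of $R$, the Cartier-divisor hypothesis entering only once, through \ref{apa-y}, to identify (i) with (ii). Note how that machinery sidesteps your obstacle: the variable $T$ is only ever adjoined to the pre-completion quotients, where $X[T]/\xx^{(m)}X[T]\cong (X/\xx^{(m)}X)[T]$ holds on the nose, and the upgrade from $\varprojlim/\varprojlim^1$-vanishing to pro-zeroness via \ref{apn-3} is performed on those systems, never on $\Lambda^{\mathcal{I}}(\cdot)[T]$ itself. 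So your instinct that this is the step with real content is sound, but the content was already paid for in \ref{apa-8} and \ref{apa-10}; your proposal re-opens it inside the corollary and does not close it.
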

	
	\begin{proof}
		This is a consequence of \ref{apa-y}, \ref{apa-10} and \ref{apa-7}.
	\end{proof}
	
	Note that \ref{cor-6} is an essential improvement of \cite[Corollary 4.5]{Sp12}, where it was shown 
	that (i) implies the equivalent conditions (ii) and (iii). 
	
\medskip

	{\bfseries{Acknowledgement.}} Many thanks to the reviewer for the careful reading of the manuscript 
	and the suggestions for improving the text and correcting references.

	\bibliographystyle{siam}

	\bibliography{hart}
	
\end{document}